\newcommand\F{\mathcal F}
\renewcommand{\P}{\mathbb{P}}
\renewcommand{\rho}{\varrho}
\newcommand\eps{\varepsilon}
\newcommand\E{\mathbb{E}}
\newcommand\R{\mathbb{R}}
\newtheorem{theorem}{Theorem}
\newtheorem{corollary}[theorem]{Corollary}
\newtheorem{lemma}[theorem]{Lemma}
\theoremstyle{definition}
\theoremstyle{example}
\newcommand\bF{{\mathbb F}}
\newcommand\bN{{\mathbb N}}
\newcommand\bP{{\mathbb P}}
\newcommand\bR{{\mathbb R}}
\newcommand\cB{\mathcal{B}}
\newcommand\cF{\mathcal{F}}
\newcommand\cL{\mathcal{L}}
\newcommand\cS{\mathcal{S}}
\begin{document}

\title{Applications of pathwise Burkholder-Davis-Gundy inequalities}
\author{Pietro Siorpaes}
\date{}
\maketitle

\begin{abstract}
   We present several applications of the pathwise Burkholder-Davis-Gundy (BDG) inequalities. Most importantly we prove them for cadlag semimartingales and a general function $\Phi$, and use this to derive BDG inequalities (non-pathwise ones) for the Bessel process of order $\alpha\geq 1$ and for martingales stopped at $\tau$, with $\tau$ in a well studied class of \emph{random} times.
   
\bigskip

\noindent\emph{Keywords:} Burkholder-Davis-Gundy, pathwise martingale inequalities, semimartingale, Bessel process, pseudo stopping times.\\
\emph{Mathematics Subject Classification (2010):} Primary 60G42, 60G44; Secondary 91G20.
\end{abstract}
\maketitle

      \section{Introduction}

In recent years a new method of proving martingale inequalities through pathwise counterparts has emerged. This approach, which historically  arose from considerations in robust mathematical finance, has in particular been applied to derive the  pathwise Burkholder-Davis-Gundy (BDG) inequalities: see \cite{Sio13BDG}, where in section 2 one can also find more information on the history of the subject.

The first goal of this paper is to generalize  the  pathwise  BDG inequalities  of  \cite{Sio13BDG}  from  discrete  to continuous time; specifically we to show that, if $X$ is a cadlag semimartingale  and $\Phi$ a very general function, one can explicitly construct  integrands $K,\tilde{K}$ such that for some constant $C_{\Phi}$ the following pathwise BDG inequalities hold
     \begin{align}\label{PBDGintro}
    \Phi(\sqrt{[X]}_t)   \leq C_{\Phi} \Phi(X_t^*) + (K\cdot X)_t \,, \qquad         
     \Phi(X^*_t) \leq C_{\Phi}    \Phi(\sqrt{[X]}_t)   +  (\tilde{K}\cdot X)_t \,  ;
   \end{align} 
    this turns out to be easy if $\Phi(t)=t$ but hard in general, even in the case where $\Phi(t)=t^p$ for some $p> 1$.
    
Trivially the \emph{pathwise} martingale inequalities imply their classical
 equivalent; the second goal of this paper is to show that the pathwise inequalities are strictly more powerful than the classical ones, in useful ways.  
    Indeed, we present several applications of \eqref{PBDGintro}, in which we are able to extend the classical BDG inequalities beyond their traditional domain of validity.
    Although we concentrate our attention exclusively on the BDG inequalities, it is clear that    also for other martingale inequalities the pathwise version is going to be analogously `better' than the classical one; in this regard, it is interesting to keep in mind that every martingale inequality in finite discrete time admits a pathwise equivalent: see \cite{BeNu14, BoNu13}. Let us now review our applications of \eqref{PBDGintro} one by one.
    
    If  $B$ is a $n$-dimensional Brownian motion started at $B_0$, $X:=||B||_{\bR^n}$ and\footnote{We recall that, while in general the BDG inequalities only hold for $p\geq 1$, they hold for any $p>0$ for \emph{continuous} local martingales. The inequalities also hold for very general functions $\Phi$: for the cadlag (resp. continuous) case one can take $\Phi$ as in Theorem \ref{PathBDGCadlag} (resp. Theorem \ref{Bessel}).} $\Phi(t)=t^p$ with $p>0$, the BDG inequalities applied to $B$ imply  that for some $c,C$
\begin{align}
\label{BDGtau}
c\, \E\Phi(\sqrt{[X]_{\tau}})  \leq \E \Phi(X_{\tau}^*)  \leq C \, \E \Phi(\sqrt{[X]_{\tau}})  
\quad \text{ for all  stopping times $\tau$.}
      	 \end{align}
     In other words, the BDG inequalities hold for such a process $X$, even if $X$ is not a local martingale;      $X$ is called an $n$-dimensional Bessel process. 
     More generally (but without making a connection with Brownian motion) one can define    
 the $\alpha$-dimensional Bessel process $X$ for all $\alpha\in \bR$. This is a positive Feller process with continuous paths,   and it is a semimartingale if $\alpha \notin (0,1)$, so   it is natural to ask for which values of $\alpha \notin (0,1)$ the BDG inequalities hold. This questions was answered by  \cite[Theorem 4.1]{GrPe98Be}, where one can find a proof of the fact that  \eqref{BDGtau} holds if $\alpha \geq 1$ (the details being spelled out just in the case $p=1$).
  We will show how this is just a corollary of the pathwise BDG inequalities; so, while the ideas of \cite{GrPe98Be} yield `good' constants\footnote{Meaning constants with the appropriate scaling in $\alpha$.} and ours do not, our approach has the advantage of simplicity. We should say that throughout the paper we make no effort to get good constants; the problem of finding the optimal constants is important and still mostly\footnote{If $\Phi(t)=t^p$ for some values of $p$ the optimal value of $c$ or $C$ is known, see \cite{Os10}.} open.

It turns out that  \eqref{BDGtau} hold not only when $\tau$ is a stopping time, but also for many random times. 
Indeed, if $\tau$ is a finite random time such that $(K\cdot X)_{\tau}$  and $(\tilde{K}\cdot X)_{\tau}$ are in $L^1$ and have zero expectation,  trivially \eqref{PBDGintro} implies that \eqref{BDGtau}.
This can be useful, since given any random time $\tau$ the set of  $M\in H^1:=\{N \text{ is a martingale and } N^*_{\infty}\in L^1(\bP)\}$ for which $\E[M_{\tau}]=0$ is `large' (it has co-dimension $1$ in  $H^1$) and can be quite explicitly characterized: see \cite[Section 3]{Nik07}. Moreover, perhaps surprisingly there are quite a number of interesting examples of random times $\tau$ (called pseudo stopping times) which are not stopping times and for which $\E M_{\tau}=0$ holds for \emph{any} $M\in H^1$; these times have been studied in \cite{NikYor05}, where one can find several equivalent characterizations and examples.

If $\tau$ is a finite\footnote{This is not really needed, as we will see.} stopping time and $A_t:=1_{[\tau,\infty)}(t)$ then  $f(\tau)=\int_0^{\infty} f(s) d A_s $; it is then natural to ask if one can generalize \eqref{BDGtau} and obtain that
 \begin{align} \label{BDGdA}
c\, \E \int_0^{\infty}\Phi(\sqrt{[X]_{s}})  dA_s \leq \E \int_0^{\infty}\Phi(X_{s}^*)  dA_s \leq C \, \E \int_0^{\infty}\Phi(\sqrt{[X]_{s}})   dA_s     
   \end{align} 
holds    for any local martingale $X$, increasing adapted $A$ with $A_{\infty}=1$ and general  $\Phi$. 
It turns out that this is true and simple to prove, although (perhaps surprisingly) this follows not integrating the pathwise BDG inequalities but rather considering the classical ones on an enlarged space; this observation is probably not new, although we include it since were not able to locate a reference in the literature.

    One can ask whether the BDG inequalities hold not only for local martingales, but also for semimartingales which admit an equivalent local martingale measure; the latter processes being of particular importance in mathematical finance, due to the Fundamental Theorem of Asset Pricing (see \cite{DelbSch:94}). 
As proved  in \cite{Sek79, Sek80}, one can exactly characterize the equivalent measures under which the so called `weighted BDG inequalities' hold. Indeed, given $\hat{\P} \sim \P$ let $Z_t:=\E[ d\hat{\P} / d\P \, | \F_t]$, so that\footnote{See \cite[Chapter 2, Theorem 8.3]{JacodShir:02} and  \cite[Chapter 8, Proposition 1.6]{ReYo99}; the fact that $M$ is a local martingale follows from $dM=(Z_{-})^{-1} dZ$, since $Z$ is a martingale.} $Z_t=\exp(M_t - [M]_t/2 )$ for a unique local martingale $M$ with $M_0=0$.  
 Then, as one can read in  \cite[Theorem 3.17 and 3.18]{kaz94}, if the underlying filtration is such that every $\bP$-martingale  is continuous, $M\in BMO(\P)$ iff there exist $c,C$ such that 
\begin{align}
\label{BDGP}
    \hspace{2cm}    	  c \hat{\E}     \sqrt{[X]_{\infty}}\leq \hat{\E} X_{\infty}^* \leq C  \hat{\E}    \sqrt{[X]_{\infty}} \, 
\end{align} 
holds for every local $\P$-martingale $X$. 
    While we cannot use the pathwise BDG inequalities to obtain with a simple proof the above  extremely satisfying result in complete generality, we can easily prove a weaker statement which does not require any knowledge about $BMO$-martingales.
    
 Finally, we briefly discuss what happens to the pathwise and the standard BDG inequalities in higher dimension (finite and infinite).

     The outline of the rest of the paper is then as follows. 
     In Section \ref{sec-DavisSemimart} we  introduce most of the notations and we derive the pathwise Davis inequalities for cadlag semimartingales from their discrete time version. In Section \ref{sec-DavisContinuous} we present an alternative and direct proof for the case of \emph{continuous} semimartingales.     
     In Section \ref{sec-BDGSemimart} we derive the    pathwise BDG inequalities for cadlag semimartingales from Davis' ones.
     In Section \ref{sec-Bessel} we prove the BDG inequalities for the Bessel processes.
     In Section \ref{sec-RandomTime} we show that the BDG inequalities hold for martingales stopped at many random times, and in  
  Section \ref{sec-RandomizedStTime} we discuss \eqref{BDGdA}.
     In Section \ref{sec-Change of Measure} we discuss what happens after a change of measure, and   finally in Section \ref{sec-HigherD} we discuss the multidimensional case.

\section{Pathwise Davis inequalities for cadlag semimartingales}
\label{sec-DavisSemimart}
We now easily obtain a version of the pathwise Davis inequalities for cadlag semimartingales by passing to the limit their discrete time version; before however, let us introduce most of the notations used throughout the paper.  We will work on an underlying filtered probability space $(\Omega, \bF, (\cF_t)_{t\geq 0}, \bP)$ whose filtration $(\F_t)_t$  satisfies the usual conditions\footnote{Meaning it is right continuous and $\cF_0$ contains all the negligible sets of $\cF_{\infty}$.}.
Given cadlag adapted processes $S, X,A$, and assuming that $X$ is a semimartingale, $A$ is of finite variation (on compact sets) and the following integrals exist, we will use the following notations. 
The cag predictable process  $S_-$  has value $S_{t-}:=\lim_{u \uparrow  t} S_u$ at time $t$, the jump of $S$ at $t$ is $\Delta S_t=S_t -S_{t-}$, the running maximum $S^*$ of $S$ is given by $S^*_{t}:=\sup_{u \leq t} |S_u|$, $[X]$ is the quadratic variation of $X$,  
$A_{\infty}$ is the (possibly infinite) limit $\lim_{t\to \infty} A_t$ (which always exists if $A$ is increasing),   $(H\cdot X)_t$ is the stochastic integral  $\int_{(0,t] } H_u dX_u$,  and  $\int_{0}^{t} H_u dA_u $ (resp. $\int_{0-}^{t-} H_u dA_u $) is the Lebesgue-Stieltjes integral  $\int_{(0,t] } H_u dA_u$ (resp. $\int_{[0,t) } H_u dA_u$).
Given arbitrary processes $K,\tilde{K}$ we will write that $K\leq \tilde{K}$ if $K_t\leq \tilde{K}_t$ holds $\bP$ a.s. and for all $t\geq 0$, and we define by convention $K_{0-}:=0$ and  $0/0:=0$, so that in particular  $X_{0-}=X^*_{0-}=[X]_{0-}=0$, the integrands $H$ in Theorem \ref{PathDCadlag} and  $H,G, F_t^{(s)}$  in Theorem \ref{PathBDGCadlag} are well defined and the measure $dX_{\cdot}^*$ (resp. $d\sqrt{[X]_{\cdot}}$) has mass $X^*_0=|X_0|$ (resp. $\sqrt{[X]_0}=|X_0|$) at $0$.

Here come the pathwise Davis inequalities for cadlag semimartingales.
 \begin{theorem}\label{PathDCadlag}
Let $(X_t)_{t\in [0,\infty)}$ be a cadlag semimartingale, and set 
\begin{align}
\label{HDavis}
H_t:= \frac{X_{t-}}{\sqrt{[X]_{t-} +(X_{t-}^*)^2 }} .
\end{align} 
Then $H$ is cag, predictable, has values in $[-1,1]$, and satisfies
\begin{align}\label{eqPathDCadlag}
    \sqrt{[X]}   \leq 3 X^* - (H\cdot X) \,, \qquad          X^* \leq 6    \sqrt{[X]}   +  2(H\cdot X) \, ;
   \end{align} 
  \end{theorem}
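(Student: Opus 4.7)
The strategy is to obtain Theorem~\ref{PathDCadlag} by discretizing $X$ along a sequence of refining random partitions and invoking the discrete pathwise Davis inequality of \cite{Sio13BDG}. Fix $t>0$. Choose a sequence of (random) partitions $\pi^n = \{0 = \tau_0^n \leq \tau_1^n \leq \cdots \leq \tau_{m_n}^n = t\}$ whose meshes go to $0$ almost surely and which exhaust the jump times of $X$ in the limit (e.g.\ dyadic partitions refined by large jump times). Define the discrete process $X^n_k := X_{\tau_k^n}$, and apply the discrete pathwise Davis inequality to $X^n$: this yields
\begin{align*}
\sqrt{[X^n]_{m_n}} \leq 3 (X^n)^*_{m_n} - (H^n \cdot X^n)_{m_n}, \qquad (X^n)^*_{m_n} \leq 6 \sqrt{[X^n]_{m_n}} + 2 (H^n \cdot X^n)_{m_n},
\end{align*}
where $H^n_k = X_{\tau_{k-1}^n}/\sqrt{[X^n]_{k-1} + ((X^n)^*_{k-1})^2}$ and $(H^n\cdot X^n)_{m_n} = \sum_{k=1}^{m_n} H^n_k (X_{\tau_k^n}-X_{\tau_{k-1}^n})$ is the discrete stochastic integral.

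The second step is to pass to the limit $n\to\infty$ in each term. The running maximum $(X^n)^*_{m_n}$ converges pathwise to $X^*_t$ since $X$ is cadlag; the discrete quadratic variation $[X^n]_{m_n}$ converges to $[X]_t$ in probability (and almost surely along a subsequence) by the standard theorem identifying quadratic variation as the limit of Riemann sums along refining partitions whose mesh tends to $0$. Since the discrete integrand $H^n$ is the discretization at the left endpoints of the cag bounded predictable process $H$ defined in \eqref{HDavis}, the Riemann sums $(H^n \cdot X^n)_{m_n}$ converge in probability to $(H\cdot X)_t$ by the classical convergence-of-Riemann-sums theorem for stochastic integrals of cag bounded adapted integrands against cadlag semimartingales.

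Combining these convergences, after extracting a common almost surely convergent subsequence, the pathwise inequalities for $X^n$ pass to the limit and yield \eqref{eqPathDCadlag} at the fixed time $t$. Since both sides of the desired inequalities are cadlag in $t$ and a countable dense set of times is enough, this gives the inequalities for all $t$ almost surely. The boundedness and predictability of $H$, and its cag property, are immediate from the definition \eqref{HDavis} using the conventions $X_{0-}=X^*_{0-}=[X]_{0-}=0$ and $0/0:=0$.

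\textbf{Main obstacle.} The delicate point is the convergence of the discretized stochastic integral $(H^n\cdot X^n)\to (H\cdot X)$, since the discretized integrand uses the \emph{discrete} running maximum and quadratic variation rather than $X^*_{t-}$ and $[X]_{t-}$; one must check that these discrete quantities are close enough to the continuous ones at the partition points so that $H^n$ approximates $H$ appropriately, and that the resulting Riemann sums still converge to the stochastic integral. This is handled by a dominated-convergence argument (recalling $|H^n|\leq 1$), together with the fact that the denominator in \eqref{HDavis} is bounded away from zero precisely on the set where $|X_{t-}|>0$, which is also where the numerator lives.
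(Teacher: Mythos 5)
Your strategy is essentially the paper's: sample $X$ along partitions with mesh tending to zero, apply the discrete pathwise Davis inequality of \cite[Theorem 1.2]{Sio13BDG} path by path, and pass to the limit using $|H^n|\leq 1$ and dominated convergence. (The paper uses deterministic dyadic partitions of the stopped path $X^t$; your random partitions refined by jump times are admissible provided the partition points are stopping times, but they buy nothing here, since discrete quadratic variations along deterministic dyadic grids already converge ucp to $[X]$ for any semimartingale, and for a cadlag path the maximum over dyadic points of the stopped path converges to $X^*_t$.)

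The soft spot is your justification of $(H^n\cdot X^n)\to (H\cdot X)$, which is the heart of the proof. As you concede in your ``main obstacle'' paragraph, $H^n_k$ is \emph{not} $H$ evaluated at $\tau^n_{k-1}$ --- it is built from the \emph{discrete} bracket and maximum --- so the classical Riemann-sum convergence theorem for caglad integrands, which you invoke first, does not apply as stated; and the dominated-convergence fix you then gesture at is left unexecuted. The paper closes this as follows: regard $H^n$ as the simple predictable process $\sum_k H^n_k 1_{(\tau^n_{k-1},\tau^n_k]}$, so that the discrete sum \emph{is} the stochastic integral $(H^n\cdot X)$; pass to a subsequence along which the discrete brackets converge a.s.\ locally uniformly, and check $H^n_s\to H_s$ a.s.\ for every $s$ --- the left partition endpoint tends to $s$ strictly from the left, so the discrete bracket and maximum converge to $[X]_{s-}$ and $X^*_{s-}$, while in the degenerate case $[X]_{s-}+(X^*_{s-})^2=0$ the path vanishes on $[0,s)$, hence $H^n_s=0=H_s$ by the convention $0/0:=0$ (your claim that the denominator is bounded away from zero ``precisely where $|X_{t-}|>0$'' is not accurate and is not the relevant dichotomy); finally apply the \emph{stochastic} dominated convergence theorem with dominating process $1$ to get $(H^n\cdot X)_t\to(H\cdot X)_t$ in probability, hence a.s.\ along a further subsequence. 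With this step made explicit, the remainder of your argument (a.s.\ convergence of the other terms at fixed $t$, then upgrading to all $t$ by right-continuity of all processes involved) matches the paper's proof.
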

\begin{proof}
Applying \cite[Theorem 1.2]{Sio13BDG} to $x_i:=X_{i/2^n}^t(\omega)$, $0\leq i \leq N$ with $2^n t\leq N $ gives
\begin{align}\label{discrDavis}
    Q^n(X)_t:=\sqrt{\sum_{i\in \bN} \big(X^t_{(i+1)/2^n} -X^t_{i/2^n} \big)^2}   \leq 3 M^n(X)_t    - (H^n\cdot X)_t  \, ,
   \end{align} 
where $M^n(X)_t:=\max_{i\in \bN} |X^t_{i/2^n}| $   and
\begin{align}
\label{Hn}
H^n_s:=\sum_{i\in \bN} 1_{(\frac{i}{2^n}, \frac{i+1}{2^n}]}(s)
\frac{X_{\frac{i}{2^n}}}{\sqrt{ Q^n(X)_{\frac{i}{2^n}} +( M^n(X)_{\frac{i}{2^n}})^2 }} .
\end{align} 
Passing to a subsequence (without relabeling) we get that  $Q^n(X)_s\to [X]_s$ 
$\bP$ a.s. for all $s\geq 0$; since $X$ is cadlag,   $ M^n(X)_s\to X^*_s$ and so\footnote{If $[X]_{s-} + (X^*_{s-})^2=0$ then $H^n_s=H_s=0$.} $H^n_s \to H_s$ $\bP$ a.s. for all $s\geq 0$.
  Since $|H^n|\leq 1$,  using the stochastic dominated convergence theorem we can take limits in \eqref{discrDavis} and obtain that    $\bP$ a.s.  $\sqrt{[X]_t}   \leq 3 X^*_t - (H\cdot X)_t $; since $t$ was arbitrary and all the processes involved are cadlag the inequality  also holds $\bP$ a.s. for all $t\geq 0$.
  The proof of the second inequality is identical.
\end{proof}

The traditional Davis inequalities \eqref{DCadlag} are a simple corollary of the pathwise ones.
  \begin{corollary}
\label{TradDavisIneq}
Under the assumptions of Theorem \ref{PathDCadlag}  
\begin{align}
\label{Davisbounds}
(H\cdot X)^*  \leq    3(X^* + \sqrt{[X]} ) \quad \text{ and } \quad [H \cdot X]\leq [X] .
\end{align} 
If $X$ is a local martingale  then so is $(H\cdot X)$, and
\begin{align}\label{DCadlag}
  \E  \sqrt{[X]_{\infty}}   \leq 3 \E X^*_{\infty}  , \qquad  \E X^*_{\infty}  \leq 6 \E \sqrt{[X]_{\infty}}  \, ;
   \end{align} 
 if moreover $  \E  \sqrt{[X]_{\infty}}  <\infty$ then  $(H\cdot X)$ is a \emph{martingale} and $(H\cdot X)^*_{\infty} \in L^1(\P)$.

\end{corollary}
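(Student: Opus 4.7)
My plan is to derive all five conclusions directly from the pathwise inequalities \eqref{eqPathDCadlag} and the fact that $H$ is predictable and bounded by $1$.

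First I would establish the two bounds in \eqref{Davisbounds}. Rearranging the two inequalities in \eqref{eqPathDCadlag} gives, pathwise, $(H\cdot X)_t \leq 3X^*_t - \sqrt{[X]_t} \leq 3X^*_t$ and $-(H\cdot X)_t \leq 3\sqrt{[X]_t} - X^*_t/2 \leq 3\sqrt{[X]_t}$, so $|(H\cdot X)_t|\leq 3(X^*_t+\sqrt{[X]_t})$ for every $t$; since $X^*$ and $\sqrt{[X]}$ are non-decreasing, passing to the sup over $u\leq t$ on the left yields $(H\cdot X)^*\leq 3(X^*+\sqrt{[X]})$. The second bound in \eqref{Davisbounds} is immediate from $[H\cdot X]_t=\int_0^t H_s^2\,d[X]_s$ together with $|H|\leq 1$.

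Next, if $X$ is a local martingale, then since $H$ is predictable and bounded, the stochastic integral $(H\cdot X)$ is a local martingale by standard theory. Let $(\tau_n)$ be a localizing sequence, so that $(H\cdot X)^{\tau_n}$ is a uniformly integrable martingale starting at $0$; in particular $\E (H\cdot X)_{\tau_n}=0$. Evaluating the pathwise inequalities \eqref{eqPathDCadlag} at $t=\tau_n$ and taking expectations gives $\E\sqrt{[X]_{\tau_n}}\leq 3\E X^*_{\tau_n}$ and $\E X^*_{\tau_n}\leq 6\E\sqrt{[X]_{\tau_n}}$; now $\tau_n\uparrow\infty$ together with the monotonicity of $X^*$ and $\sqrt{[X]}$ allows the monotone convergence theorem to pass to the limit, yielding \eqref{DCadlag}.

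Finally, assume $\E\sqrt{[X]_\infty}<\infty$. From \eqref{DCadlag} we also have $\E X^*_\infty<\infty$, and the bound in \eqref{Davisbounds} gives $\E(H\cdot X)^*_\infty\leq 3(\E X^*_\infty+\E\sqrt{[X]_\infty})<\infty$, so $(H\cdot X)^*_\infty\in L^1(\P)$. A local martingale whose running maximum is integrable is a uniformly integrable martingale (since $|(H\cdot X)_{t\wedge\tau_n}|\leq (H\cdot X)^*_\infty\in L^1$ allows dominated convergence to upgrade localization to genuine martingality), completing the proof.

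The arguments are essentially bookkeeping on top of Theorem \ref{PathDCadlag}; the only point that requires a little care is the passage from the pathwise inequalities to their expected-value version, where I use the localization of $H\cdot X$ rather than of $X$ so that the correction term $\E(H\cdot X)_{\tau_n}$ genuinely vanishes.
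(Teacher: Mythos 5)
Your proposal is correct and follows essentially the same route as the paper: read off the pointwise bounds $-3\sqrt{[X]}\leq (H\cdot X)\leq 3X^*$ and $[H\cdot X]=H^2\cdot[X]\leq[X]$ from \eqref{eqPathDCadlag}, localize $H\cdot X$ itself, take expectations and pass to the limit by monotone convergence, and finally use \eqref{Davisbounds} plus dominated convergence to upgrade $H\cdot X$ to a true martingale when $\E\sqrt{[X]_\infty}<\infty$. The only cosmetic difference is that you evaluate the pathwise inequality at $\tau_n$ (justified via the UI stopped martingale), whereas the paper evaluates at finite $t$ for the stopped process and lets $t,n\to\infty$; both are fine.
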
 
\begin{proof}
It trivially follows from \eqref{eqPathDCadlag} that 
$-3    \sqrt{[X]}\leq  (H\cdot X)   \leq 3 X^* $; since $|H|\leq 1$, $[H\cdot X]=H^2\cdot [X]\leq [X]$, \eqref{Davisbounds} hold.
Assume now that  $X$ is a local-martingale, in which case also $H\cdot X$ is a  local martingales (because $H$ is cag). 
Let $\tau_n$ be sequence of stopping times which localizes $H\cdot X$; applying \eqref{eqPathDCadlag} to $X^{\tau_n}$ (instead of $X$), taking expectations and then taking limits for $t,n\to \infty$ we get \eqref{DCadlag} by monotone convergence.
If $  \E \sqrt{[X]_{\infty}} $  is finite \eqref{Davisbounds} gives that 
 $\E(H\cdot X)^*_{\infty}<\infty$, so the dominated convergence theorem ensures that the local martingale $H\cdot X$ is a martingale. 
 \end{proof}

\section{Davis inequality for continuous local martingales}
\label{sec-DavisContinuous}
We give here an alternative statement and derivation of pathwise Davis' inequalities for  \emph{continuous} semimartingales; the following treatment builds on \cite[Theorem 5.1]{Sio13BDG}, where the easier of the two inequalities was proved for continuous local martingales starting at zero.
The proof in this section has the advantage of being a relatively straightforward application of Ito's formula.
\begin{theorem}\label{BDGcont} If $X$ is a continuous semimartingale  then $ \bP \text{\, a.s. for all } t\geq 0$
\begin{align*}
  X_t^* - 4 \sqrt{[X]_t} \leq   \Big( \frac{2 X_t}{\sqrt{ [X]_t }\vee X^*_t}\cdot X_t\Big)_t  \leq  3 X_t^* - 2 \sqrt{[X]_t}    .
\end{align*}
\end{theorem}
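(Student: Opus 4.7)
The plan is to apply Itô's formula to the function $F(x,y) = x^2/y$ evaluated along $(X_t, Y_t)$ with $Y_t := \sqrt{[X]_t}\vee X_t^*$. Because $X$ is a continuous semimartingale and $Y$ is a continuous process of finite variation (the supremum of two continuous increasing processes), no mixed bracket term appears and Itô yields
\begin{equation*}
d\!\left(\frac{X_t^2}{Y_t}\right) = \frac{2X_t}{Y_t}\,dX_t - \frac{X_t^2}{Y_t^2}\,dY_t + \frac{1}{Y_t}\,d[X]_t.
\end{equation*}
Integrating and isolating the stochastic integral of interest gives the pathwise identity
\begin{equation*}
N_t := \Big(\tfrac{2X}{Y}\cdot X\Big)_t = \frac{X_t^2}{Y_t} - \frac{X_0^2}{Y_0} + \int_0^t \frac{X_s^2}{Y_s^2}\,dY_s - \int_0^t \frac{1}{Y_s}\,d[X]_s,
\end{equation*}
from which the two inequalities should follow by estimating the four terms on the right.

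These estimates rest on three elementary facts. First, $|X|\leq X^*\leq Y$ and $\sqrt{[X]}\leq Y$ give the pointwise bounds $X^2/Y^2 \leq 1$ and $\sqrt{[X]}/Y \leq 1$; in particular $X_t^2/Y_t \leq X_t^*$. Second, since $\sqrt{[X]}$ is continuous and increasing, $d[X]_s = 2\sqrt{[X]_s}\,d\sqrt{[X]_s}$, which rewrites the last integral as $2\int_0^t \tfrac{\sqrt{[X]_s}}{Y_s}\,d\sqrt{[X]_s}$. Third, the measure $dX_s^*$ is carried by $\{|X_s|=X_s^*\}$, where $X_s^2 = (X_s^*)^2$.

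Next I would partition $[0,t]$ according to whether $X^* \geq \sqrt{[X]}$ or $\sqrt{[X]} > X^*$. On the first regime, $Y=X^*$ and $dY$ reduces essentially to $dX^*$, on whose support the integrand $X^2/Y^2$ equals $1$; on the second regime, $Y=\sqrt{[X]}$ and the $d[X]$-integrand simplifies to $2\,d\sqrt{[X]}$, producing a large negative contribution. Combining the $dY$- and $d[X]$-integrals together on each piece and choosing opposite-direction bounds should produce the upper bound $3X_t^* - 2\sqrt{[X]_t}$ and the lower bound $X_t^* - 4\sqrt{[X]_t}$ respectively.

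The hard part will be extracting the precise constants $(3,2)$ and $(1,4)$: isolated crude estimates (the $dY$-integral by $Y_t$ and the $d[X]$-integral by $2\sqrt{[X]_t}$) give only bounds like $N_t \leq 2X_t^* + \sqrt{[X]_t}$, which is too loose. The needed cancellation between $\tfrac{X_s^2}{Y_s^2}\,dY_s$ and $\tfrac{1}{Y_s}\,d[X]_s$ on $\{\sqrt{[X]}>X^*\}$, where their integrands are respectively $\leq 1$ and exactly $2$, must be tracked carefully on each regime. Once the inequalities hold at each fixed $t$, simultaneity for all $t\geq 0$ is automatic from the continuity of $X$, $X^*$, $\sqrt{[X]}$ and $N$.
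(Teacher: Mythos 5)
Your starting point is the same as the paper's: apply It\^o's formula to $X^2/Y$ with $Y$ the maximum of the quadratic-variation term and $X^*$, note that $Y$ has finite variation so no mixed bracket appears, and exploit that $dX^*$ is carried by $\{|X|=X^*\}$ together with the decomposition $d(f\vee g)=1_{\{f\geq g\}}\,df+1_{\{f<g\}}\,dg$. There is, however, a technical gap already at the first step: It\^o's formula cannot be applied to $(x,y)\mapsto x^2/y$ along $(X,Y)$ when $Y_0=|X_0|=0$ (and $Y$ may vanish on a whole initial interval if $X$ starts at $0$ and stays there). The paper avoids this by working with the regularized integrand $2X/\big(([X]+\varepsilon)\vee X^*\big)$ for $\varepsilon>0$, proving the inequalities for each $\varepsilon$, and then letting $\varepsilon\to 0$ via the stochastic dominated convergence theorem (convergence uniformly on compacts in probability, then a.s.\ along a subsequence), using the convention $0/0=0$. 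Your argument needs an analogous regularization or localization, which you do not provide.

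The more serious gap is that the heart of the theorem --- the precise constants $(3,2)$ and $(1,4)$ --- is exactly the step you defer as ``the hard part,'' and without it there is no proof: your crude estimate $N_t\leq 2X_t^*+\sqrt{[X]_t}$ is not the claimed bound. In the paper this step is a purely deterministic inequality (Lemma \ref{dfveeg}): for continuous increasing strictly positive $f,g$, a change of variables plus integration by parts collapses the It\^o expression to $\int d(g^2-f^2)/(f\vee g)$, which by the arguments of \cite{Sio13BDG} (after equation (4.3) there) lies between $2g-3f$ and $3g-2f$. The upper bound of the theorem then follows from the observation that replacing every occurrence of $X$ by $X^*$ in the It\^o expression only increases it, so the lemma can be applied with $f=[X]+\varepsilon$, $g=X^*$. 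For the lower bound the regime-splitting you sketch is not enough on its own: the paper must in addition compare the true expression $I^{\varepsilon}$ with the fully ``replaced'' expression $J^{\varepsilon}$ and prove $I^{\varepsilon}-J^{\varepsilon}\geq -X^*-[X]$, splitting the difference into a boundary term (bounded below by $-X^*$) and a $d\big(([X]+\varepsilon)\vee X^*\big)$-integral which, thanks to the support property of $dX^*$, reduces to a $d[X]$-integral with integrand bounded below by $-1$. Nothing in your outline produces these cancellations, so what you have is a plausible plan sharing the paper's It\^o starting point, with the decisive deterministic estimates still to be supplied.
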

Notice that the functional form of the integrand in Theorem \ref{BDGcont} is slightly different\footnote{We conjecture that \cite[Theorem 1]{Sio13BDG}, from which Theorem \ref{PathDCadlag} follows, also holds with the integrand $h_n=x_n/(\sqrt{[x]_n} \vee x^*_n)$ and potentially different constants, although proving this would require much longer computations.} from the one obtained in Theorem \ref{PathDCadlag}.  
 The next lemma is just  a slight modification of the arguments after equation (4.3) in \cite{Sio13BDG}.
\begin{lemma}
\label{dfveeg}
Let $f, g:\R^+\to \R^+$ be continuous increasing and such that $f(0)>0$ and $g(0)>0$.
 Then on $\bR^+$
\begin{align} \label{2g-3f}
 2 g- 3 f \leq \frac{g^2}{f\vee g}+ \int_0^{\cdot} \frac{g^2}{f^2\vee g^2}\, d(f\vee g) - \int_0^{\cdot} \frac1{f\vee g}\, df^2\leq 3 g- 2 f \, .
\end{align}
\end{lemma}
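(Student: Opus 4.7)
The plan is to use pathwise integration by parts in the Lebesgue--Stieltjes sense. Write $h := f \vee g$ and let $E(t)$ denote the middle expression in \eqref{2g-3f}. I would first observe that the two integrals in $E$ are arranged to cancel the derivative of the leading term: the product rule for continuous functions of bounded variation gives $d(g^2/h) = (1/h)\,d(g^2) - (g^2/h^2)\,dh$, and telescoping yields the clean identity
\[
dE = \frac{d(g^2) - d(f^2)}{h} = \frac{2g\,dg - 2f\,df}{f \vee g} \qquad (t > 0).
\]

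Next I would introduce the upper and lower margins $U := (3g - 2f) - E$ and $L := E - (2g - 3f)$, with the goal of showing $U, L \geq 0$. A direct calculation gives $dU = (3 - 2g/h)\,dg - 2(1 - f/h)\,df$ and $dL = (3 - 2f/h)\,df - 2(1 - g/h)\,dg$. In the region $\{f \geq g\}$, where $h = f$, one has $dU = (3 - 2g/f)\,dg \geq dg \geq 0$, so $U$ is non-decreasing there; symmetrically $L$ is non-decreasing on $\{g \geq f\}$. The difficulty is that on the complementary regions the differentials $dU$ and $dL$ contain an apparently negative term.

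The key step is a second integration by parts. On any excursion $[s_1, s_2]$ into $\{g \geq f\}$ that begins on the diagonal (so $f(s_1) = g(s_1)$), I would apply the identity $d(f^2/g) = (2f/g)\,df - (f^2/g^2)\,dg$ to the troublesome term in $dU$ and compute, after cancellation,
\[
U(t) - U(s_1) = \frac{(g(t) - f(t))^2}{g(t)} + \int_{s_1}^{t} (f/g)^2\,dg \geq 0, \qquad t \in [s_1, s_2].
\]
The analogous identity $d(g^2/f) = (2g/f)\,dg - (g^2/f^2)\,df$ yields the corresponding control of $L$ on excursions into $\{f \geq g\}$. Decomposing $\bR^+$ into the closed diagonal set $\{f = g\}$ (on which $dU = dg \geq 0$ and $dL = df \geq 0$) and the open excursion intervals (on which the monotonicity just proved applies), and checking that the initial conditions $U(0), L(0) \geq 0$ hold (trivially in the case $f(0) = g(0)$ relevant for Theorem \ref{BDGcont}, where $U(0) = 0$ and $L(0) = 2g(0)$), one concludes $U, L \geq 0$ on all of $\bR^+$.

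The main obstacle is precisely the integration-by-parts step that recasts the seemingly negative contribution $-2\int(1 - f/g)\,df$ as the manifestly non-negative square-plus-integral above; once the correct auxiliary quantity $f^2/g$ (resp.\ $g^2/f$) is identified, the remaining manipulations are routine algebra together with a careful decomposition of $[0, t]$ into excursions between visits to the diagonal.
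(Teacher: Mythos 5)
Your core computations are right: $dE=\bigl(d(g^2)-d(f^2)\bigr)/h$ for $t>0$, the formulas for $dU$ and $dL$, and the excursion identity $U(t)-U(s_1)=\frac{(g(t)-f(t))^2}{g(t)}+\int_{s_1}^t (f/g)^2\,dg$ all check out, and this is a genuinely different, self-contained route from the paper, which instead integrates by parts once to replace the middle term by \eqref{g^2-f^2} and then quotes the estimates after (4.3) of \cite{Sio13BDG} together with the $f\leftrightarrow g$ symmetry. The gap is the step you dismiss as ``checking the initial conditions'': it is not a formality and it cannot be checked under the stated hypotheses. With the paper's reading of the integrals one has $E(0)=g(0)^2/(f(0)\vee g(0))$, so $U(0)=3g(0)-2f(0)-g(0)^2/(f(0)\vee g(0))$ is negative whenever $f(0)$ is moderately larger than $g(0)$, and on $\{f\geq g\}$ your argument only yields $U(t)\geq U(0)$. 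In fact the right-hand inequality of \eqref{2g-3f} is false in the stated generality: take $g\equiv 1$, $f(t)=10+t$; the middle term equals $\tfrac{1}{10}-2t$ while $3g-2f=-17-2t$, so the upper bound fails for every $t\geq 0$ (symmetrically, the left-hand inequality fails when $g(0)>3f(0)$, e.g.\ $f\equiv\tfrac1{10}$, $g(t)=1+t$). What your proof actually establishes is the lemma under an additional hypothesis such as $f(0)\leq g(0)$ for the right inequality and $g(0)\leq 3f(0)$ for the left (in particular $f(0)=g(0)$ gives both, with $U(0)=0$, $L(0)=2g(0)$ as you say); for a path starting off the diagonal you also need the variant of your excursion identity without $f(s_1)=g(s_1)$, which on an initial segment contained in $\{g\geq f\}$ gives $U(t)=\frac{(g(t)-f(t))^2}{g(t)}+g(0)-\frac{f(0)^2}{g(0)}+\int_0^t(f/g)^2\,dg\geq 0$ precisely when $f(0)\leq g(0)$.

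Two further points. Your parenthetical that $f(0)=g(0)$ is ``the case relevant for Theorem \ref{BDGcont}'' is not accurate: there the lemma is applied with $f=[X]+\eps$ and $g=X^*$, so $f(0)=X_0^2+\eps\neq |X_0|=g(0)$ in general; what saves that application is that the quantity \eqref{BDG1tbE} carries the evaluation between $s=0$ and $s=t$, i.e.\ the time-zero term is subtracted and the mismatch is of order $\eps$, but this concerns the theorem's proof, not Lemma \ref{dfveeg} as stated, which claims the inequalities for arbitrary $f(0),g(0)>0$. Finally, the phrase ``decomposing $\R^+$ into the diagonal set and the open excursion intervals'' should be made precise via the (at most countably many) maximal components of the open set $\{f\neq g\}$, since $\{f=g\}$ can be a Cantor-type set: write $U(t)-U(0)$ as the integral of $dU$ over $(0,t]\cap\{f=g\}$ (where its density reduces to $dg\geq 0$) plus the increments over those components, each of which your identity makes nonnegative except possibly the component containing $0$, which is exactly where the initial-value hypothesis enters. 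With these repairs your argument is complete and, unlike the paper's one-line reduction (whose identification of the middle term with \eqref{g^2-f^2} drops the constant $g(0)^2/(f(0)\vee g(0))$), it makes the initial-value issue explicit.
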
 
\begin{proof}
  Since $d\frac{1}{x}=-\frac{1}{x^2} dx$, by a change of variables 
 $\int \frac{g^2}{f^2\vee g^2}\, d(f\vee g)= \int g^2\, d\frac{-1}{f\vee g}$ and
  integrating the latter by parts we obtain that the middle term in \eqref{2g-3f} equals 
\begin{align}
\label{g^2-f^2}
\int_{0}^{\cdot} \frac{d(g^2-f^2)}{f\vee g } \,  .
\end{align} 
As easily shown with the arguments\footnote{Unlike \cite{Sio13BDG}, our $f$ and $g$ are strictly positive; however this has  only the effect of slightly simplifying the calculations.} after equation (4.3) in \cite{Sio13BDG},  \eqref{g^2-f^2} is always smaller than $3g-2f$. Applying this inequality with the role of $f$ and $g$ reversed and then multiplying by $-1$ we see that \eqref{g^2-f^2} is bigger than  $-(3f-2g)$.
\end{proof}

\begin{proof}[Proof of Theorem \ref{BDGcont}]
For $\eps\geq 0$ define $H^{\eps}:=\frac{2X}{([X]+\eps)\vee X^*}$ and  $I_t^{\eps}:=(H^{\eps} \cdot X)_t$.
When $\eps>0$ applying Ito's formula to  $X^2$ and 
$1/(([X]+\eps)\vee X^*)$ we find
$$ d\frac{X^2}{ ([X]+\eps)\vee X^*}=\frac{-X^2 d\Big(([X]+\eps)\vee X^*\Big)}{( ([X]+\eps)\vee X^*)^2}  + \frac {2X\, dX+\, d[X] } {([X]+\eps)\vee X^*} \, .$$
In other words  for $\eps>0$ the integral  $I_t^{\eps} $ equals\footnote{We also use the trivial fact that $d[X]=d([X]+\eps)$.}
\begin{align}\label{BDG1tbE}
 \frac{X_s^2}{([X]_s+\eps)\vee X_s^*} \bigg|^{s=t}_{s=0} + \int_0^t \frac{ X^2 d\Big(([X]+\eps)\vee X^*\Big)}{(([X]+\eps)\vee X^{*})^2 }\,  -  \int_0^t \frac {d([X]+\eps)}{([X]+\eps)\vee X^*}    .
\end{align}
Since $X_0^2=X_0^{*2}$, the quantity in \eqref{BDG1tbE} trivially gets bigger if we replace each occurrence of $X$ by $X^*$, so applying Lemma \ref{dfveeg} to  $f(t)= [X]_t+\eps, g(t)= X_t^*$ gives 
$$I^{\eps} \leq 3  X^* - 2([X]+\eps) .$$
To pass to the limit as $\eps\to 0$ notice that if $[X]_t\vee X_t^*=0$ then $X_t=0$, so $H_t^{\eps}=0$ and $H_t^{0}=0$ (since by our definition $0/0=0$); if instead 
$[X]_t\vee X_t^*>0$ then $H_t^{\eps}\to H_t^{0}$ is trivially true.
In summary, the stochastic dominated convergence theorem gives that $I^{\eps}\to I^0$ uniformly on compacts in probability as $\eps\to 0$, so there exists some $\eps_n\to 0$ for which $I^{\eps_n}_t\to I^0_t$ a.s. for all $t$, proving that 
$I^{0} \leq 3  X^* - 2[X] .$

To prove the opposite inequality, replace $X$ with $X^*$ in \eqref{BDG1tbE} in both occurrences, and call $J^{\eps}$ the resulting quantity; then Lemma \ref{dfveeg} applied to  $f= [X]+\eps, g= X^*$ yields $J^{\eps} \geq 2  X^* - 3([X]+\eps) $. The thesis $I^{0} \geq   X^* - 4[X] $ then follows taking $\eps_n\to 0$ as above if we can show that $I^{\eps}- J^{\eps} \geq -X^* - [X]$. To prove the latter, let us bound separately the two  terms $C^{\eps},D^{\eps}$ whose sum gives $I^{\eps}- J^{\eps}$. First 
$$C^{\eps}:=\frac{X_t^2- X_t^{*2}}{([X]_t+\eps)\vee X_t^*}\geq \frac{ 0 - X_t^{*2}}{([X]_t+\eps)\vee X_t^*}\geq - X_t^* . $$
For the second term, notice that if $f,g$ are continuous increasing then 
\begin{align}\label{df and dg}
d(f \vee g) = 1_{\{f\geq g\}} df + 1_{\{f < g\}} dg ;
\end{align}
applying this to $f= [X]+\eps$ and $g=X^*$, and using that $X^2_t=(X_t^*)^2$  holds\footnote{Indeed  $O:=\{t>0: X_t^*> |X_t| \}$ is open in $\bR$, so it can be written as a countable union of open intervals; on each of these  $X_{\cdot}^*$ is constant, so $dX_{\cdot}^*$ is supported by $\bR_+\setminus O=\{t\geq 0: X_t^*=|X_t| \}$.} for $dX_t^*$ a.e. $t$, we get that 
\begin{align*}
 D^{\eps}:= \int_0^t \frac{ X^2 -  X^{*2}}{ (([X]+\eps) \vee X^{*})^2 }\, d\Big(( [X]+\eps)\vee X^* \Big)  =      \int_0^t  \frac{ X^2 -  X^{*2}}{ ([X]+\eps)^2}\,  1_{\{ [X]+\eps \geq  X^* \}}   d  [X] \, , 
\end{align*} 
which is bounded below by $-[X]_t$ since the integrand on the right hand side  is bounded below by  $-1$.
\end{proof}

\section{Pathwise BDG inequalities for cadlag semimartingales}
\label{sec-BDGSemimart}
In this section we modify ideas of A. Garsia to show that the general (pathwise) 
BDG inequalities are a consequence of Davis' ones; this approach was already taken in \cite{Sio13BDG} in the (technically much simpler) discrete-time case when $\Phi(t)=t^p$ for some $p> 1$. 
 For expositions of Garsia's ideas we refer to \cite[Pag 101 to 106]{Me76} or  \cite{Ch75}; a slightly modified version\footnote{The quantity $\E[\Phi(|X|)]$ is replaced by the seminorm 
$||X||_{\Phi}:=\inf \{\lambda>0: \E[\Phi(|X|/\lambda)]\leq 1 \} $.} is given in \cite[Chapter 7, Lemma 91]{DeMeB}.

Notice that we \emph{do not} derive the general pathwise
BDG inequalities passing to the limit the discrete time statement \cite[Theorem 6.3]{Sio13BDG} as done for Davis inequalities. The problem with this approach is that for $p>1$ it is not easy to show that the discretized integrands $H^n, G^n$ corresponding to \eqref{Hn} should converge to their continuous time equivalent, since $H^n_s$ can be written as\footnote{Indeed $Y^n_s=\Phi(\sqrt{Q^n(X)_s})$} $\int_0^s K^n_u dY^n_u$, where also the integrator $Y^n$ depends on $n$ (and similarly for $G^n$).

We will henceforth consider a function $\phi:\bR_+ \to \bR_+$ which is cadlag, increasing, unbounded and  such that $\phi(0)=0$; as usual we define $\phi(0-):=0$, so that in particular $d\phi$ has no atom at zero.
The integral $\Phi(t):=\int_0^t \phi(s) ds$ is a  convex increasing function such that $\Phi(t)/t\to \infty$ as $t\to \infty$. We will also assume that $\Phi(t)$ is `tame', i.e. that there exists some constant $C_{\Phi}$ such that $\Phi(2t)\leq C_{\Phi} \Phi(t)$ for all $t$; equivalently,  
there exists some constant $c_{\phi}$ such that $\phi(2t)\leq c_{\phi} \phi(t)$ for all $t$. 
Such functions $\Phi$ are well studied in connections to  Orlicz spaces, and  are often called `Young functions', although they are also referred to by various other names. In particular the interested reader should consult \cite{KraRut58}, where $\Phi$ would be called an `N-function satisfying the $\Delta_2$-condition'. One can then show that the `exponent\footnote{If $\Phi(t)=t^c$ then $p$ in \eqref{pexp} equals $c$.}'
\begin{align}
\label{pexp}
p:=\sup_{u>0}	\frac{ u \phi(u)}{\Phi(u)} \, ,
\end{align} 
is in $(1,\infty)$.
Moreover if $\psi(t):=\inf\{s: \phi(s)>t\}$ is the cad (so $\psi_{-}(t):=\psi(t-)$ is the cag)  inverse of $\phi$,
and $\Psi(t):=\int_0^t \psi(s) ds$ is the convex conjugate of $\Phi$, the following inequalities hold:
\begin{align}
\label{Young}
uv\leq \Phi(u)+\Psi(v)   ,\quad \Phi(au)\leq a^p \Phi(u) \text{ if } a\geq 1, \quad \Psi(au)\leq  a\Psi(u) \text{ if } a\leq 1 \, , \\ \nonumber
\Psi(s)\leq (p-1) \Phi(\psi(s-)) \,  \text{ and } \,  \psi_{-}(\phi(s))\leq s  \text{ and so } \Psi(\phi(s))\leq (p-1) \Phi(s)
\end{align} 

 \begin{theorem}\label{PathBDGCadlag}
Assume that $(X_t)_{t\in [0,\infty)}$ is a cadlag semimartingale, $\Phi$ is as above and $p$ is given by \eqref{pexp}. 
Let $C$ (resp. $D$) be the cad inverse of $\sqrt{[X]}$ (resp. $X^*$) and
define $c_p:= p(6p)^{p} $,

$$H_t:= \int_{[0,\sqrt{[X]}_{t-})} pF^{(C_s)}_t d\phi(s),
 \qquad G_t:=\int_{[0,X_{t-}^*)} pF^{(D_s)}_t d\phi(s), $$  where $(F^{(s)}_t)_{(s,t):s <t}$ is defined as 
 $$ F^{(s)}_t:=\frac{X_{t-} - X_{s-}}{\sqrt{[X]_{t-} - [X]_{s-} + \sup_{s\leq u < t}(X_{u} - X_{s-})^2}} \, . $$ 
Then  $H$, $G$ are cag predictable and 
\begin{align}\label{PathBGCadlag}
    \Phi(\sqrt{[X]})   \leq c_p \Phi(X^*) - (H\cdot X) \,, \qquad         
     \Phi(X^*) \leq c_p     \Phi(\sqrt{[X]})   +  2(G\cdot X) .
   \end{align} 
Moreover  if $\phi$ is continuous then $H$ and $G$ are lad and 
\begin{align}
\label{HandG2}
    H_t=\int_{[0,t)} pF^{(s)}_t d\phi(\sqrt{[X]_s}) \, ,\quad  G_t=\int_{[0,t)} pF^{(s)}_t d\phi(X_s^*) \, .
\end{align} 
  \end{theorem}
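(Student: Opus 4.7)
My plan is to derive \eqref{PathBGCadlag} in the spirit of Garsia, by applying Davis' pathwise inequalities (Theorem \ref{PathDCadlag}) to the shifted semimartingales $Z^{s}_u := X_{s+u}-X_{s-}$ ($u\ge 0$) and then integrating the resulting bounds against $d\phi$. The preparatory observation is that Davis' integrand \eqref{HDavis} computed for $Z^{s}$ satisfies $H^{Z^s}_u = F^{(s)}_{s+u}$ for $u>0$, so that $(H^{Z^s}\cdot Z^s)_{t-s} = \int_{(s,t]}F^{(s)}_v\,dX_v$. Theorem \ref{PathDCadlag} therefore delivers, for every $s<t$, both $\sqrt{[X]_t-[X]_{s-}} \le 3\sup_{s\le v\le t}|X_v-X_{s-}| - \int_{(s,t]}F^{(s)}_v\,dX_v$ and its reverse $\sup_{s\le v\le t}|X_v-X_{s-}| \le 6\sqrt{[X]_t-[X]_{s-}} + 2\int_{(s,t]}F^{(s)}_v\,dX_v$.

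For the first inequality of \eqref{PathBGCadlag} I would combine the integration-by-parts identity $\Phi(a)=\int_{[0,a]}(a-u)\,d\phi(u)$ with the pointwise fact that $\sqrt{[X]_{C_s-}}\le s$ (by definition of the cad inverse $C_s$), so that subadditivity of the square root gives $\sqrt{[X]_t}-s\le \sqrt{[X]_t-[X]_{C_s-}}$ for every $s<\sqrt{[X]_t}$. Inserting the first Davis inequality above (with $s$ replaced by $C_s$) and the crude bound $\sup_{C_s\le v\le t}|X_v-X_{C_s-}|\le 2X^*_t$ yields $\sqrt{[X]_t}-s\le 6X^*_t-\int_{(C_s,t]}F^{(C_s)}_v\,dX_v$. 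Fubini shows $(H\cdot X)_t=p\int_{[0,\sqrt{[X]_{t-}})}\int_{(C_s,t]}F^{(C_s)}_v\,dX_v\,d\phi(s)$, so integrating the bound against $p\,d\phi(s)$ and adding $\Phi(\sqrt{[X]_t})$ produces $\Phi(\sqrt{[X]_t}) + (H\cdot X)_t \le 6pX^*_t\phi(\sqrt{[X]_t}) - (p-1)\Phi(\sqrt{[X]_t})$. Finally Young's inequality \eqref{Young}, together with $\Phi(au)\le a^p\Phi(u)$ for $a\ge 1$ and $\Psi(\phi(s))\le(p-1)\Phi(s)$, bounds $6pX^*_t\phi(\sqrt{[X]_t})$ by $(6p)^p\Phi(X^*_t)+(p-1)\Phi(\sqrt{[X]_t})$; the two $(p-1)\Phi(\sqrt{[X]_t})$ terms cancel, giving the first bound with constant $(6p)^p\le c_p$. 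The second inequality is derived symmetrically by using the reversed Davis inequality on $Z^{D_s}$ together with $X^*_t-s\le\sup_{D_s\le v\le t}|X_v-X_{D_s-}|$ (valid for $s<X^*_t$ because $X^*_{D_s-}\le s$), and then running the same Fubini/Young machinery.

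The main technical obstacle is the cadlag bookkeeping: the domains $[0,\sqrt{[X]_t}]$ and $[0,X^*_t]$ each split into a ``bulk'' part (below $\sqrt{[X]_{t-}}$, resp.\ $X^*_{t-}$) where the hitting time is strictly less than $t$ and Davis contributes nontrivially, and a residual ``jump piece'' (between left and right limits at $t$) where the hitting time equals $t$ and the shifted stochastic integral vanishes; the jump piece's contribution to $\Phi$ must be controlled using $X^*_t-X^*_{t-}\le|\Delta X_t|\le\sqrt{[X]_t-[X]_{t-}}$ (and analogously) and absorbed into the main term. One must also verify the cag predictability of $H$ and $G$, which reduces to joint measurability of $(s,t,\omega)\mapsto F^{(s)}_t$ together with left-continuity in $t$ (inherited from the left limits in its definition). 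Finally, the alternative representation \eqref{HandG2} for continuous $\phi$ is obtained by the change of variable $s=\sqrt{[X]_r}$, resp.\ $s=X^*_r$: since a continuous $\phi$ carries no atoms, the pushforward of $d\phi$ under the cad inverse $C$ (resp.\ $D$) coincides with $d\phi(\sqrt{[X]_r})$ (resp.\ $d\phi(X^*_r)$) on the range of the underlying process, and the integration limits align with $[0,t)$ by the very definition of the cad inverse.
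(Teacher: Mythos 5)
Your route to \eqref{PathBGCadlag} is essentially the paper's own Garsia-style argument: apply the pathwise Davis inequalities to the post-$C_s$ (resp.\ post-$D_s$) increments, identify the Davis integrand of the shifted process with $F^{(\cdot)}$, integrate the resulting layer-cake bound against $p\,d\phi(s)$, interchange via Fubini, and finish with Young's inequality \eqref{Young} and the exponent \eqref{pexp}. Three points in that main argument need tightening. First, you cannot prove the Davis bound for a deterministic shift $s$ and then ``replace $s$ by $C_s$'': the null set depends on $s$. The correct move (which the paper makes) is to apply Theorem \ref{PathDCadlag} directly to $Y^{(s)}_u:=X_{C_s+u}-X_{C_s-}$, a semimartingale for the time-changed filtration $\cF_{C_s+u}$ because $C_s$ is a stopping time. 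Second, the interchange of $d\phi(s)$ and $dX_u$ is a \emph{stochastic} Fubini theorem (the inner integral is a stochastic integral), with a small sigma-finiteness adjustment since $d\phi$ need not be finite. Third, your bulk/jump-piece split with the crude absorption ``$X^*_t-X^*_{t-}\le|\Delta X_t|\le\sqrt{[X]_t-[X]_{t-}}$, absorbed into the main term'' inflates the constant (roughly $6\to 8$, resp.\ $6\to 7$) and so does not deliver the stated $c_p=p(6p)^p$; either keep the indicator bookkeeping of the paper, integrating $(\sqrt{[X]_t}-s)^+$ over all $s\ge 0$ so that no extra term ever appears, or observe that on the jump piece the increment is itself already dominated by the same main bound ($\sqrt{[X]_t}-s\le\sqrt{\Delta[X]_t}=|\Delta X_t|\le 2X^*_t\le 6X^*_t$, and analogously $X^*_t-s\le\sqrt{\Delta[X]_t}\le 6\sqrt{[X]_t}$), so no absorption is needed.

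There is also a genuine gap: the theorem asserts in addition that $H$ and $G$ are \emph{lad} when $\phi$ is continuous, and your proposal never addresses right limits. This is not automatic from the formula \eqref{HandG2}: as $t_n\downarrow t$ the integrand $F^{(s)}_{t_n}$ can fail to converge for those $s$ with $\sup_{s\le u\le t}|X_u-X_{s-}|=0$ (the same oscillation phenomenon, e.g.\ $X$ behaving like $t^2\sin(1/t)$, that makes the Davis integrand of Theorem \ref{PathDCadlag} non-lad). One must show that this exceptional set of parameters $s$ is a single interval $[s_0,t]$ on which $X$, hence $[X]$ and $X^*$, are constant, so that it is null for the measure $d\phi(\sqrt{[X]_s})+d\phi(X^*_s)$; only then does dominated convergence give existence of $H_{t+}$ and $G_{t+}$ (and the explicit right-limit formulas). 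Without some argument of this type the ``lad'' part of the statement is unproved.
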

We will derive in  \eqref{HandG+} integral expressions for $H_{t+}$ and $G_{t+}$; these show that if $X$ is continuous then also $H$ and $G$ are continuous.
 Notice that the function $\Phi(t)=t$ does not satisfy the assumptions made on $\Phi$ in Theorem \ref{PathBDGCadlag}; despite of this,  Theorem \ref{PathDCadlag} affords the equivalent of \eqref{PathBGCadlag}.

Of course, given a sequence of real numbers $(x_n)_{n\geq 0}$ and the probability space $\{\hat{\omega}\}$ made of one point, applying  \eqref{PathBGCadlag} (resp. \eqref{eqPathDCadlag}) to $X_t(\hat{\omega}):=\sum_n x_n 1_{[n,n+1)}(t)$ we obtain pathwise BDG inequalities for functions of a real variable, which if $\Phi(t)=t^p$ reduce to \cite[Theorem 6.3]{Sio13BDG} for $p>1$ (resp. to \cite[Theorem 1.2]{Sio13BDG} for $p=1$).

Moreover, if $\phi$ is continuous the integrands $H$ and $G$ in Theorem \ref{PathBDGCadlag} are caglad, so \eqref{PathBGCadlag} are really path-by-path inequalities.  Indeed, for $\bP$ a.e. $\omega$ one can compute $(H \cdot X)_{\cdot}(\omega)$ and $[X]_{\cdot}(\omega):=X^2(\omega) - (2X_{-} \cdot X)(\omega)$ only making use of $H_{\cdot}(\omega)$ and $ X_{\cdot}(\omega)$ by taking limits of Riemann sums computed along appropriate sequences of hitting times (see\footnote{One can apply the cited theorem since $H^+:=(H_{t+})_t$  is adapted (since such is $H$, and the filtration is right continuous) and $H_{t-}^+=H_t$.}  \cite[Theorem 7.14]{Bi81} and \cite{Ka95}). 
Unfortunately this remark does not apply to Davis inequalities \eqref{eqPathDCadlag}, since  $H$ in Theorem \ref{PathDCadlag} is not necessarily lad, not even if $X$ is continuously differentiable\footnote{\label{nlad} If $X_t:=t^2\sin(1/t)$ for $t>0$ and $X_0:=0$, $X$ is $C^1$ and thus has finite variation, so it is a semimartingale with $[X]=0$, and   $H_t=X_{t-}/X^*_{t-}$ keeps oscillating between $1$ and $-1$ as $t\downarrow 0$.}.
  
  The traditional BDG inequalities 
   are a simple corollary of the pathwise ones.  
  \begin{corollary}
\label{HXmart}
Under the assumptions of Theorem \ref{PathBDGCadlag} for all $y\in [\frac{1}{p},1]$
\begin{align}
\label{[HXGX]}
\sqrt{[H\cdot X]} \leq p^2 \Phi(\sqrt{[X]}) , \qquad 
\sqrt{[G\cdot X]} \leq (py)^p \Phi(\sqrt{[X]}) +\frac{p-1}{y} \Phi(X^*) ,
\end{align} 
and  if $X$ is a local-martingale then so are $(H\cdot X)$ and $ (G\cdot X)$, and 
\begin{align}\label{BGCadlag}
  \E \Phi(\sqrt{[X]_{\infty})}   \leq c_p \E \Phi(X^*_{\infty})  , \qquad  \E\Phi(X^*_{\infty})  \leq c_p \E\Phi(\sqrt{[X]_{\infty}}) .
   \end{align} 
In particular if $X$ is a local-martingale and $  \E  \Phi(\sqrt{[X]_{\infty})} <\infty$   then  $(H\cdot X)_t$ and $ (G\cdot X)_t$ are \emph{martingales} and $(H\cdot X)^*_{\infty}, (G\cdot X)^*_{\infty} \in L^1(\P)$.
\end{corollary}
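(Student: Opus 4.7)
The plan is to follow the template of Corollary \ref{TradDavisIneq}, with the added twist that the discrete time Young-type inequalities \eqref{Young} are needed to get \eqref{[HXGX]}. The cornerstone is the pointwise bounds
$|H_t| \le p\phi(\sqrt{[X]_{t-}})$ and $|G_t| \le p\phi(X^*_{t-})$,
which I would derive from $|F^{(s)}_t| \le 1$ (a direct consequence of $|X_{t-} - X_{s-}| \le \sup_{s \le u < t} |X_u - X_{s-}|$) together with the triangle inequality inside the defining integrals of $H$ and $G$. Since $H$ is predictable, $[H\cdot X]_t = \int_0^t H_s^2\, d[X]_s \le p^2 \int_0^t \phi(\sqrt{[X]_{s-}})^2\, d[X]_s$; setting $u_s := \sqrt{[X]_s}$ and using $d[X]_s = (u_s + u_{s-})\,du_s$, one has $(u_s + u_{s-})\phi(u_{s-}) \le 2u_s\phi(u_s) \le 2p\Phi(u_s)$ by \eqref{pexp}, while $\phi(u_{s-})\,du_s \le d\Phi(u_s)$ and $\int_0^t \Phi(u_s)\, d\Phi(u_s) \le \Phi(u_t)^2$ follow from Stieltjes integration by parts (the jump term $[\Phi(u)]_t = \sum_s(\Delta \Phi(u_s))^2$ being bounded by $\Phi(u_t)^2$). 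Chaining these estimates yields the first bound of \eqref{[HXGX]}. The analogous estimate for $G$ reduces to $\sqrt{[G\cdot X]_t} \le p\phi(X^*_t)\sqrt{[X]_t}$, which I would split by Young's inequality \eqref{Young} with parameter $a = 1/(py) \in [1/p,1]$---using $\Phi(au) \le a^p \Phi(u)$ for $au = py\sqrt{[X]_t}$ and $\Psi(au) \le a\Psi(u)$ for $au = \phi(X^*_t)/(py)$, followed by $\Psi(\phi(s)) \le (p-1)\Phi(s)$ from \eqref{Young}---to obtain the stated two-parameter family.

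Since $\sqrt{[X]_-}$ and $X^*_-$ are cag adapted and therefore locally bounded, so are $H$ and $G$ by the preceding pointwise bounds; hence $H\cdot X$ and $G\cdot X$ are local martingales whenever $X$ is one. To derive the traditional BDG \eqref{BGCadlag}, I would take a joint localizing sequence $(\tau_n)$ reducing both stochastic integrals to martingales, apply the pathwise inequality \eqref{PathBGCadlag} at the stopping time $\tau_n \wedge t$, take expectations so the local-martingale terms vanish, and pass to the limit by monotone convergence in $n$ and then in $t$.

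Finally, under the integrability hypothesis $\E \Phi(\sqrt{[X]_\infty}) < \infty$, I would apply the already-proven Davis inequality \eqref{DCadlag} (from Corollary \ref{TradDavisIneq}) to the local martingale $H\cdot X$ to obtain $\E (H\cdot X)^*_\infty \le 6\, \E\sqrt{[H\cdot X]_\infty} \le 6p^2\, \E\Phi(\sqrt{[X]_\infty}) < \infty$ via \eqref{[HXGX]}; a local martingale whose supremum is in $L^1(\bP)$ is a uniformly integrable martingale, giving the conclusion for $H\cdot X$. The argument for $G\cdot X$ is identical once one observes that the just-established \eqref{BGCadlag} yields $\E\Phi(X^*_\infty) \le c_p\, \E\Phi(\sqrt{[X]_\infty}) < \infty$, so that the second bound in \eqref{[HXGX]} (say with $y = 1$) gives $\E\sqrt{[G\cdot X]_\infty} < \infty$. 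I expect the main technical friction to lie in the first paragraph: bookkeeping the cadlag jump contributions in $\sqrt{[X]}$ and $\phi$ precisely enough to recover the stated constants $p^2$ and $(py)^p + (p-1)/y$, rather than merely bounds of the same order. Conceptually, however, nothing beyond \eqref{pexp}, \eqref{Young}, and integration by parts is needed.
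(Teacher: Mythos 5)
Your overall architecture is the paper's: bound $|H|\le p\,\phi(\sqrt{[X]_-})$ and $|G|\le p\,\phi(X^*_-)$ via $|F^{(s)}_t|\le 1$, estimate the brackets with \eqref{pexp} and \eqref{Young}, localize and take expectations in \eqref{PathBGCadlag} with monotone convergence to get \eqref{BGCadlag}, and finally apply the Davis inequality \eqref{DCadlag} to the local martingales $H\cdot X$ and $G\cdot X$, using \eqref{[HXGX]} (and, for the $G$ term, the just-proved \eqref{BGCadlag} to control $\E\Phi(X^*_\infty)$) to upgrade them to true martingales with integrable supremum. The second and third paragraphs of your proposal are essentially verbatim the paper's argument and are fine.

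The gap is in your proof of \eqref{[HXGX]}, which asserts specific constants. For the $H$-bound, your chain $d[X]=(u+u_-)\,du$, $(u+u_-)\phi(u_-)\le 2u\phi(u)\le 2p\Phi(u)$, $\phi(u_-)\,du\le d\Phi(u)$, $\int\Phi(u)\,d\Phi(u)\le\Phi(u_t)^2$ gives $[H\cdot X]_t\le 2p^3\Phi(\sqrt{[X]_t})^2$, i.e. the constant $\sqrt2\,p^{3/2}$, which exceeds the claimed $p^2$ whenever $p<2$; no amount of jump bookkeeping repairs this, since the loss is the crude factor $2$ at the start (it could be repaired by keeping $p\bigl(\Phi(u)+\Phi(u_-)\bigr)$ and using $\int(\Phi(u)+\Phi(u_-))\,d\Phi(u)=\Phi(u_t)^2$ exactly). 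The paper avoids the decomposition entirely: since $\phi(\sqrt{[X]_\cdot})^2$ is increasing, the elementary bound $\int_0^t A\,dB\le A_tB_t$ gives $[H\cdot X]_t\le p^2\phi(\sqrt{[X]_t})^2[X]_t=p^2\bigl(\sqrt{[X]_t}\,\phi(\sqrt{[X]_t})\bigr)^2\le p^4\Phi(\sqrt{[X]_t})^2$ directly from \eqref{pexp}. For the $G$-bound your parameter assignment does not parse: with $a=1/(py)$ and $au=py\sqrt{[X]_t}$ you are invoking $\Phi(au)\le a^p\Phi(u)$ for $a\le1$ (not what \eqref{Young} provides) and with the wrong argument $u=(py)^2\sqrt{[X]_t}$. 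The split that yields the stated form is the paper's: $p\,\phi(X^*)\sqrt{[X]}=\bigl(py\sqrt{[X]}\bigr)\bigl(\phi(X^*)/y\bigr)$, then Young, $\Phi(py\sqrt{[X]})\le(py)^p\Phi(\sqrt{[X]})$ since $py\ge1$, and $\Psi(\phi(s))\le(p-1)\Phi(s)$ for the conjugate term; be aware that the intermediate scaling $\Psi(\phi(X^*)/y)\le\Psi(\phi(X^*))/y$ is an instance of \eqref{Young} only at $y=1$ (convexity goes the other way for $1/y>1$), and $y=1$ is the only value needed in the sequel, so do not expect a different choice of $a$ to smooth this out.
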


The rest of this section is devoted to the proof of Theorem \ref{PathBDGCadlag} and  Corollary \ref{HXmart}.

\begin{proof}[Proof of Corollary \ref{HXmart}]
Since for increasing positive $A,B$, with cad $B$, we have $\int_0^t A dB\leq A_{t} B_t$, and since $|F_t^{(s)}|\leq 1$ implies that $H^2\leq p^2 \phi(\sqrt{[X]})^2$, using \eqref{pexp} we get 
$$[H\cdot X]=H^2\cdot [X]\leq p^2 \phi(\sqrt{[X]})^2 \cdot [X]
\leq p^2 \phi(\sqrt{[X]})^2  [X]
\leq p^2 (p\Phi(\sqrt{[X]}))^2 .$$

Analogously for $G$ we can write, for any $y>0$
$$\sqrt{[G\cdot X]}\leq \sqrt{p^2 \phi(X^*)^2 \cdot [X]}
\leq p \phi(X^*) \sqrt{[X]} = \big(py\sqrt{[X]}\big) \left(\frac{\phi(X^*)}{y} \right)  ;
$$
when $y\in [1/p,1]$ we can apply the inequalities  \eqref{Young} and get
$$
 \big(py\sqrt{[X]}\big) \left(\frac{\phi(X^*)}{y} \right)  
 \leq
  \Phi(py\sqrt{[X]})+ \Psi\left(\frac{\phi(X^*)}{y} \right) \leq
    (py)^p\Phi(\sqrt{[X]})+ \frac{\Psi(\phi(X^*))}{y} ;
$$
now bound the last term above using  that $\Psi(\phi(s))\leq (p-1) \Phi(s)$; putting the inequalities together concludes the proof of \eqref{[HXGX]}.

If  $X$ is a local-martingale, working as in Corollary \ref{TradDavisIneq} gives the thesis: the only difference here is that $\E(H\cdot X)^*_{\infty}<\infty$ follows 
from $  \E  \Phi(\sqrt{[X]_{\infty})}<\infty$ since \eqref{[HXGX]} gives that 
 $  \E  \sqrt{[H\cdot X]_{\infty}} <\infty$ and we can then apply  \eqref{DCadlag}  to $H\cdot X$ (instead of $X$).
 \end{proof}

\begin{proof}[Proof of Theorem \ref{PathBDGCadlag}.] 
\textbf{Step 1: $H,G$ are cag predictable.} 
Since  $\{C_s < t \}=\{s < \sqrt{[X]}_{t-}\}$,  $F_t^{(C_{\cdot})}$ is defined on $[0,\sqrt{[X]}_{t-})$ and setting $F^{(s)}_t:=0$ for $s\geq t$ we get\footnote{We write   $\int_0^{\infty}$ for  $\int_{(0,\infty)}$, which is the same as  $\int_{(0,\infty]}$ since by definition $\Phi(\infty)=\lim_{t\to \infty} \Phi(t)$.} 
$$H_t:= \int_{0}^{\infty} pF^{(C_s)}_t d\phi(s),
 \qquad G_t:=\int_{0}^{\infty} pF^{(D_s)}_t d\phi(s). $$ 
 Denote with $\F_t \otimes \mathcal{B}$  the product sigma algebra of $\F_t$ with the Borel sets $\mathcal{B}$ of $\bR_+$; we will now prove that $(F_t^{(C_{s})})_{s\in [0,\infty)}$  is $\cF_t\otimes \cB$ measurable,  so that  $H_t$ is  $\cF_t$ measurable, i.e. $H$ is adapted. 
 Since $\sqrt{[X]}$ is adapted, $C_s$ is a stopping time and so $C_s\wedge t$ is $\cF_t$ measurable, and so since $C_{\cdot}$ is cad the map $Z(\omega,s):= (\omega, C_s(\omega)\wedge t)$ is $\cF_t\otimes \cB / \cF_t\otimes \cB$ measurable. Analogously (but using \emph{left} continuity\footnote{We warn the reader that for  $s\uparrow t$ the limit of $F^{(s)}_t$ may not exist (so we specified $s\in [0,t)$).} of $(F_t^{(s)})_{s\in [0,t)}$ and the fact that  $F^{(s)}_t=0$ for $s\geq t$) the map $F^{(\cdot)}_t$ is  $\cF_t\otimes \cB$ measurable, and thus so is the composition $F^{(C_{\cdot})}_t=F^{(\cdot)}_t \circ Z$.
  Since $F^{(C_s)}_t\in [-1,1]$, the dominated convergence theorem implies that $H$ is cag (so predictable):  indeed if $t_n\uparrow t$ and $M^{C_s}_t:=\sup_{C_s\leq u <t} |X_{u} -X_{C_s -}|>0$   then trivially $M^{C_s}_{t_n}\to M^{C_s}_t$ and  $F^{(C_{s})}_{t_n}\to F^{(C_{s})}_t$, whereas if $M^{C_s}_t=0$ then trivially $M^{C_s}_{t_n}=0$ and so   $0=F^{(C_{s})}_{t_n}\to F^{(C_{s})}_t=0$. 
  The proof that $G$ is adapted and cag  is analogous.
  
  \textbf{Step 2: $X$ satisfies $  \Phi(\sqrt{[X]})   \leq c_p \Phi(X^*) - (H\cdot X)$.} 
Since $C_s$ is a stopping time $Y^{(s)}_t:=X_{C_s+t} -X_{C_{s}-}$ is a  semimartingale (w.r.t. the time changed filtration $\mathcal{F}^{(s)}_t:=\mathcal{F}_{C_s+t}$) and satisfies
\begin{align}
\label{[Ys]}
 \sqrt{[X]_{C_s+t}}-  \sqrt{[X]_{C_s -}} \leq  \sqrt{[X]_{C_s+t} -  [X]_{C_s -} }
 =\sqrt{[Y^{(s)}]_t} 
\end{align} 
Define  $f(x,q,s):=x/\sqrt{q+s^2}$ and apply \eqref{eqPathDCadlag} to $Y^{(s)}$ 
 (instead of $X$) to get
\begin{align} \label{PDavY}
    \sqrt{[Y^{(s)}]}   \leq 3 Y^{(s)*} - (H^{(s)}\cdot Y^{(s)}) \quad \text{ where } \quad 
    H^{(s)}:=f(Y_{-}^{(s)},[Y^{(s)}]_{-},Y^{(s)*}_{-})   \, .
   \end{align} 
Writing the integrals as limits (in probability, uniformly on compacts) of Riemann sums we get 
\begin{align*} \textstyle
(H^{(s)}\cdot Y^{(s)})_{\cdot}=\int_{C_s}^{C_s+ \cdot}F_u^{(C_s)} dX_u  \, ,
\end{align*} 
so since $Y^{(s)*}_t \leq 2 X^*_{C_s+t}$, using \eqref{[Ys]},  \eqref{PDavY} we get
\begin{align} \textstyle
\label{d[X]leq}
 \sqrt{[X]_{C_s+\cdot}}-  \sqrt{[X]_{C_s-}} \leq  6  X^*_{C_s+\cdot} -
  \int_{C_s}^{C_s+\cdot}F_u^{(C_s)} dX_u \, .
\end{align} 
Since $\sqrt{[X]}_{C_s -}\leq s$,  $\{ \sqrt{[X]_t} \geq s \} = \{ C_{s-} \leq  t \} \supseteq \{ C_{s} \leq  t \}$ and $[X]$ is constant on $[C_{s-},C_s)$ (when this interval is non-empty) we get
\begin{align*}
(\sqrt{[X]_t}-s)^+ \leq  (\sqrt{[X]_t} - \sqrt{[X]_{C_s -}}) 1_{\{ \sqrt{[X]_t} \geq s \}}  = (\sqrt{[X]_t} - \sqrt{[X]_{C_s -}}) 1_{\{ C_{s} \leq  t \}}  
\end{align*} 
which, combined with \eqref{d[X]leq} and with $X^*_{t}  1_{\{ C_{s} \leq  t \}}\leq X^*_{t}  1_{\{ \sqrt{[X]_t} \geq s \}}  $,     gives
\begin{align}
\label{<lambda}
(\sqrt{[X]_t}-s)^+  \leq  
 1_{\{ \sqrt{[X]_t} \geq s \}}  6  X^*_{t} - 1_{\{ C_{s} \leq  t \}}   \int_{C_s}^{t}F_u^{(C_s)} dX_u  .
\end{align} 
Integrating \eqref{<lambda} over $s\in [0,\infty)$ with respect to $d\phi(s)$ and using the identities  
$\Phi(t)=\int_0^{\infty} (t-s)^+ d \phi(s)$ and 
$1_{\{ C_{s} \leq  t \}}   \int_{C_s}^{t}F_u^{(C_s)} dX_u =  
\int_{0}^{t}F_u^{(C_s)} 1_{\{ C_{s} <  u \}} dX_u $   gives
\begin{align} 
\label{AfterFub}
\Phi(\sqrt{[X]_t})\leq  6  X^*_{t}\phi(\sqrt{[X]_t}) -
\int_{0}^{\infty} \Big( \int_{0}^{t}F_u^{(C_s)} 1_{\{ C_{s} <  u \}} dX_u \Big)  d\phi(s)
\end{align} 
Since  $\{C_s<u \}=\{s<\sqrt{[X]}_{u-}\}$,   the stochastic Fubini theorem \cite[Chapter 4, Theorem\footnote{As observed after the statement of the theorem, by passing from $d\mu$ to  $f d\mu$ for some $f\in L^1(\mu)$ one can prove the theorem for \emph{sigma}-finite $\mu$. Saying it differently, one can consider the \emph{finite} measure $d\mu(s)= \exp(-\phi(s)) d\phi(s)$ and apply Theorem 65 to  $H_s^u:=\exp(\phi(s)) F_u^{(C_s)} 1_{[0,A_{u-})}(s)$.} 65]{Pro04} gives
 \begin{align} \label{Fubini}
\int_{0}^{\infty}  \int_{0}^{t}F_u^{(C_s)} 1_{\{ C_{s} <  u \}} dX_u  d\phi(s) =
\int_{0}^{t}  \int_{[\sqrt{[X]}_{0-},\sqrt{[X]}_{u-})} F^{(C_s)}_u d\phi(s)  dX_u .
\end{align} 
Now apply the inequalities \eqref{Young} to write 
\begin{align*}
 6X^*_{t} \phi(\sqrt{[X]_t})\leq
  \Phi(6pX^*_{t}) +\Psi(\phi(\sqrt{[X]_t})/p)\leq 
  (6p)^p\Phi(X^*_{t}) +\Psi(\phi(\sqrt{[X]_t}))/p
\end{align*} 
and bound the last term using  that $\Psi(\phi(s))\leq (p-1) \Phi(s)$; combine the resulting inequality with \eqref{AfterFub} and \eqref{Fubini} to get 
\begin{align*} 
\Phi(\sqrt{[X]_t})\Big(1-\frac{p-1}{p}\Big) \leq   (6p)^p  \Phi(X^*_{t}) -  
\int_0^t \frac{H_u}{p}  dX_{u} ,
\end{align*} 
i.e. the first inequality \eqref{PathBGCadlag}. 

  \textbf{Step 3: $X$ satisfies $\Phi(X^*) \leq c_p     \Phi(\sqrt{[X]})   +  2(G\cdot X) $.} 
Proceeding analogously using $X^*$ and $D$  (instead of $\sqrt{[X]}$ and $C$) yields a $Y^{(s)}$ which satisfies
\begin{align*}
X_{D_s+t}^* -  X_{D_s-}^* \leq \sup_{u\in [D_s,D_s+t]} |X_u - X_{D_s-}|=  Y^{(s)*}_t \end{align*} 
and since $\sqrt{[Y^{(s)}]_{\cdot}}  \leq  \sqrt{[X]_{D_s + \cdot}}$ we obtain 
\begin{align*} \textstyle
X_{D_s+ \cdot}^* -  X_{D_s -}^* \leq 6   \sqrt{[X]_{D_s + \cdot }}  +  2     \int_{D_s}^{D_s+\cdot}F_u^{(D_s)} dX_u \, ;
\end{align*} 
 the proof continues  exactly as before, yielding the second inequality \eqref{PathBGCadlag}. 
 
   \textbf{Step 4: Alternative expression for $H,G$.} 
 If $\phi$ is continuous and $A=[X]$, the cad inverse $g$ of $\phi \circ A$ equals $C \circ \psi $ and  $(\phi\circ A)(u-)=\phi(A_{u-}-)$, so 
 \begin{align}
\label{phicont}
\int_{(\phi\circ A)(0-)}^{(\phi\circ A)(u-)} F^{(g(s))}_u ds  =
\int_{\phi(A_{0-}-)}^{\phi(A_{u-}-)} F^{(C_{\psi(s)})}_u ds
\, .
\end{align} 
By change of variable the first  integral in \eqref{phicont} equals 
  $\int_{[0,u)} F^{(s)}_u d\phi(A_s)$, and  the second one  $\int_{[A_{0-},A_{u-})} F^{(C_s)}_u d\phi(s)$; thus  $\int_{[0,t)} pF^{(s)}_t d\phi(\sqrt{[X]_s})$ equals
   $H_t$. Proceed analogously for $A=X^*$ and $G$.

\textbf{Step 5: $H$ and $G$ are lad.}
We will use the expression \eqref{HandG2}, writing however the integrals over $[0,\infty)$ and extending $F^{(s)}_t$ to be zero for $s\geq t$. Define the quantities $\hat{M}^{s}_t:=\sup_{s\leq u \leq t} |X_{u} -X_{s -}|$ for $s \leq t$ (so that $\hat{M}^{s}_t=|X_{t} -X_{t-}|$ for $s=t$) and
 $$ \hat{F}^{(s)}_t:=\frac{X_{t} - X_{s-}}{\sqrt{[X]_{t} - [X]_{s-} + (\hat{M}^{s}_t)^2}} \, \quad \text{ for } \, s\leq t,
 \quad   \hat{F}^{(s)}_t:= 0 
 \quad \text{ for } \, s> t \, .$$ 
If $t_n\downarrow t$ and $s>t$    then by definition $F^{(s)}_{t_n}=0=\hat{F}^{(s)}_t$ for $n$ big enough (such that $t_n <s$).
If $s\leq t$ then trivially $\hat{M}^{s}_{t_n}\to \hat{M}^{s}_t$, so if $\hat{M}^{s}_t>0$ we get  $F^{(s)}_{t_n}\to \hat{F}^{(s)}_t$; however  if $\hat{M}^{s}_t=0$ it can happen\footnote{For example take $s=0$ and see footnote \ref{nlad}.} that $\hat{M}^{s}_{t_n}>0$ for all $n$ and that  $F^{(s)}_{t_n}$ does not converge to $\hat{F}^{(s)}_t=0$. We shall now show that this can only happen for $s$ in  a set of $d\mu:=d(\phi(\sqrt{[X]_{\cdot}})+\phi(X^*_{\cdot}))$ measure zero, so $F^{(s)}_{t_n}\to \hat{F}^{(s)}_t$ for $d\mu$ a.e. $s$ and the dominated convergence theorem implies that  $H$ and $G$ lad and
\begin{align}
\label{HandG+}
    H_{t+}=\int_{[0,t]} p\hat{F}^{(s)}_t d\phi(\sqrt{[X]_s}) \, ,\quad  G_{t+}=\int_{[0,t]} p\hat{F}^{(s)}_t d\phi(X_s^*) \, .
\end{align} 
If the set $Z:=\{s: \hat{M}^{s}_t=0\}$ contains some element $\underline{s}$ then necessarily it contains the whole interval $[\underline{s},t]$; it follows that, for some  $s_0\in [0,t]$, $Z$ equals either  $[s_0,t]$ or  $(s_0,t]$. Since $X$ is cad, if $Z\ni s_n\downarrow s$ then $s\in Z$, so $Z=[s_0,t]$.
Since $X_u$ takes the constant value $X_{s_0 -}$ for all $u\in [s_0,t]$, 
  $[X]_u$ (resp. $X_u^*$) takes the constant value $[X]_{s_0 -}$ (resp. $X^*_{s_0 -}$) for all $u\in [s_0,t]$, so  $d\mu$ gives measure $0$ to $[s_0,t]$.

  \end{proof}

\section{The Bessel process}
\label{sec-Bessel}
  
  We will now prove BDG inequalities for the Bessel process as a corollary of the pathwise Davis inequalities. While Davis inequalities follow easily, to recover the general BDG inequalities we do not use Theorem \ref{PathBDGCadlag}, but rather apply   a strengthened version of Davis inequality,  obtained by a rather delicate  modification of the arguments in \cite[Chapter 7, Lemma 91]{DeMeB}. 
 \begin{theorem}
\label{Bessel}
Let $X$ be the  Bessel process of dimension $\alpha \in [1,\infty)$  started at $X_0 \geq 0$ and $\Phi(t)=t^p$ for $p>0$, then there exist constants $c,C$ such that
\begin{align}
\label{BDGBessel}
c\, \E \Phi(\sqrt{[X]_{\tau}})  \leq \E \Phi(X_{\tau}^*)   \leq C \, \E \Phi(\sqrt{[X]_{\tau}})  
\quad \text{ for all  stopping times $\tau$ } .
      	 \end{align}
More generally, \eqref{BDGBessel} holds if $\Phi:\bR_+ \to \bR_+$ is cadlag, increasing,  such that $\Phi(x)=0$ iff $x=0$ and for which $\sup_{t>0}\Phi(\beta t)/\Phi(t) < \infty $ for some (and thus all) $\beta>1$.
\end{theorem}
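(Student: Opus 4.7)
The plan is to combine the pathwise Davis inequality from Theorem \ref{PathDCadlag} with the Garsia-style Orlicz bootstrap of \cite[Chapter 7, Lemma 91]{DeMeB} applied directly, without going through the general pathwise BDG of Theorem \ref{PathBDGCadlag}. For $\alpha \geq 1$ the Bessel process $X$ admits a semimartingale decomposition $X = X_0 + M + A$ in which $M$ is a standard Brownian motion (so $[X]_t = t$) and $A$ is a continuous non-decreasing process; the assumption $\alpha \geq 1$ is precisely what guarantees the monotonicity of $A$.

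Apply Theorem \ref{PathDCadlag} to $X$; since $X \geq 0$ the Davis integrand satisfies $0 \leq H \leq 1$. Decompose $(H \cdot X) = (H \cdot M) + (H \cdot A)$ into a local martingale plus a non-decreasing process. The key pathwise estimate is $(H \cdot A)_t \leq C_\alpha \sqrt{[X]_t}$: for $\alpha \geq 2$, where $dA_s = \tfrac{\alpha-1}{2} X_s^{-1}\, ds$, this follows from $H_s/X_s = 1/\sqrt{[X]_s + (X_s^*)^2} \leq 1/\sqrt{s}$ and $\int_0^t s^{-1/2}\, ds = 2\sqrt{t}$; for $\alpha \in [1,2)$ the local-time contribution to $A$ is annihilated by the fact that $H = 0$ wherever $X = 0$, and the remaining smooth part of the drift is handled as above. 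Substituting into \eqref{eqPathDCadlag} yields the pathwise inequalities
\begin{align*}
\sqrt{[X]_t} &\leq 3 X^*_t - (H \cdot M)_t, \\
X^*_t &\leq (6 + 2 C_\alpha)\sqrt{[X]_t} + 2(H \cdot M)_t,
\end{align*}
in which $(H \cdot M)$ is a local martingale with $\sqrt{[(H \cdot M)]} \leq \sqrt{[X]}$ since $|H| \leq 1$.

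Taking expectations, and using that $|H| \leq 1$ together with the integrability provided by \eqref{DCadlag} applied to $M$ makes $(H \cdot M)$ a true martingale, yields the $\Phi(t) = t$ (Davis) case for $X$ at once. To upgrade to general tame $\Phi$, I invoke the promised strengthening of Davis' inequality, obtained by a careful modification of the Orlicz Garsia lemma in \cite[Chapter 7, Lemma 91]{DeMeB}. The principal obstacle is the direction $\E \Phi(\sqrt{[X]_\tau}) \leq C\, \E \Phi(X_\tau^*)$: a naive application of $\Phi$ to the first pathwise bound, combined with tameness $\Phi(2t) \leq C_\Phi \Phi(t)$ and a BDG-type bound on $(H \cdot M)$, produces a term $C'\, \E \Phi(\sqrt{[X]_\tau})$ on the right-hand side that cannot be directly absorbed on the left. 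The delicate point is therefore a dyadic stopping-time and summation argument in the Orlicz norm that, exploiting $\sqrt{[(H \cdot M)]} \leq \sqrt{[X]}$ and the tameness of $\Phi$, closes the estimate without circularity and delivers both inequalities in \eqref{BDGBessel}.
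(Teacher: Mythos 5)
Your first half is sound and is essentially the paper's Lemma \ref{BesselEstHX} rewritten pathwise: since $H\geq 0$ and the drift of the Bessel process is non-decreasing for $\alpha\geq 1$, the term $(H\cdot A)$ is non-negative and bounded by $(\alpha-1)\sqrt{[X]_t}$ (using $[X]_s=X_0^2+s$, not $s$ -- a harmless slip given the paper's convention $[X]_0=X_0^2$), so the pathwise Davis inequality \eqref{eqPathDCadlag} yields the case $\Phi(t)=t$ of \eqref{BDGBessel} after localization and expectation. This is the same mechanism the paper uses, only phrased with conditional expectations there.

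The genuine gap is the second half, which is exactly the part the theorem is about and which you only gesture at ("a dyadic stopping-time and summation argument in the Orlicz norm ... closes the estimate"). An unconditional Davis inequality $\E\sqrt{[X]_\tau}\leq c\,\E X^*_\tau$ for all stopping times does \emph{not} feed into the Garsia machinery for a process that is not a local martingale: what Lemma 91 of \cite{DeMeB} (or, as the paper uses, \cite[Lemma 1.1]{LenLepPra80}) needs is a domination of \emph{increments after an arbitrary stopping time} $\sigma$, of the form $\E\big(\sqrt{[X]_{\hat\sigma}}-\sqrt{[X]_{\sigma}}\,\big|\,\cF_\sigma\big)\leq c\,\E\big(X^*_{\hat\sigma}\,\big|\,\cF_\sigma\big)$ and its counterpart with the roles reversed. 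Obtaining these forces you to restart the process at $\sigma$, and the restarted process is \emph{not} a Bessel process started at $0$: $Y_t:=X_{\sigma+t}-X_\sigma$ satisfies $dY=\frac{\alpha-1}{2(Y+X_\sigma)}dt+d\hat W$, i.e.\ Lemma \ref{BesselEstHX} with a shift $C=X_\sigma\geq 0$, and the two directions behave asymmetrically: the positivity-of-drift bound survives any $C\geq 0$ (giving \eqref{CBessel1}), but the lower bound \eqref{CBessel2} requires $C=0$, which is why the paper must use $\hat X_t:=X_{\sigma+t}$ for one inequality and $Y$ for the other, together with the one-sided comparisons $\sqrt{[X]_{\sigma+t}}-\sqrt{[X]_\sigma}\leq\sqrt{[\hat X]_t}$ and $\sqrt{[Y]_t}\leq\sqrt{[X]_{\sigma+t}}$ (the reverse comparisons fail, e.g.\ $[\hat X]_t=[X]_{\sigma+t}-[X]_\sigma+X_\sigma^2$). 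None of this appears in your sketch, and your appeal to $\sqrt{[H\cdot M]}\leq\sqrt{[X]}$ runs straight into the circularity you yourself identify. Finally, note that the class of $\Phi$ in the statement includes $t^p$ with $0<p<1$, i.e.\ non-convex moderate functions, which an Orlicz-norm argument modeled on Young functions does not cover as stated; the paper avoids this by invoking the Lenglart--L\'epingle--Pratelli lemma, which is formulated for general moderate functions. As written, the proposal proves the Davis case but not the theorem.
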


The only facts about $X$ which we will need in the following proof 
 is that $X_{\cdot}>0$ $\bP\otimes \mathcal{L}^1$ a.e. and $X$ is a  weak solution\footnote{The explosion time  of \eqref{bessel} is $\infty$, i.e. the solution to 	\eqref{bessel} is defined for all $t\in [0,\infty)$.} 
 of      \begin{align}
\label{bessel}
dX=\frac{\alpha -1}{2X} dt + dW , 
\end{align} 
where $W$ is a standard Brownian motion w.r.t some underlying filtration $(\cF_t)_t$.
In fact $X$  is positive and it never hits $0$ (after time zero) if $\alpha \geq 2$, whereas for $\alpha \in (0,2)$  a.s. $X$ hits zero but the set $\{s: X_s=0 \}$ has Lebesgue measure zero (for all these statements see \cite[Page 442]{ReYo99}).
That $X$ solves  \eqref{bessel} is stated in \cite[Chapter 11, Exercise 1.26]{ReYo99}, and if $\alpha \geq 2$  the proof is simple and can be found just before \cite[Chapter 11, Proposition 1.10]{ReYo99}.

 Notice in particular that $X$ has continuous paths and 
 $[X]_t=X_0^2+ t$.

\begin{lemma}
\label{BesselEstHX}
Let $X$ be a semimartingale such that $X_{\cdot}+C>0$ $\bP\otimes \mathcal{L}^1$ a.e. and  
\begin{align}
\label{besselX_0}
dX_{t}=\frac{\gamma}{2(X_{t}+C)} dt + dW_t  \,  , \quad  X_0\geq 0 \, ,
\end{align} 
where $W$ is a standard Brownian motion w.r.t some underlying filtration $(\cF_t)_t$, 
 $C\geq 0$ is a $\cF_0$ measurable random variable and  $\gamma>0$. Then 
\begin{align}
\label{CBessel1}
 \E ( X^*_{\tau} |\cF_0  )  \leq (6  + 2\gamma)  \E(\sqrt{[X]}_{\tau} |\cF_0 ) \, 
\end{align} 
and if $C=0$ then
\begin{align}
\label{CBessel2}
  \E (\sqrt{[X]}_{\tau} |\cF_0  )  \leq  3 \E( X^*_{\tau} |\cF_0  ) \, .
\end{align} 
\end{lemma}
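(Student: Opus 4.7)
The plan is to apply the pathwise Davis inequalities of Theorem \ref{PathDCadlag} directly to $X$ and handle the drift piece using the specific form of the integrand $H$ and the SDE \eqref{besselX_0}. Since $X$ is continuous with martingale part $W$, we have $[X]_t = t$; for $t>0$ the Davis integrand reads $H_t = X_t/\sqrt{t+(X_t^*)^2}$, satisfies $|H|\leq 1$, and has the same sign as $X_t$. Decomposing $dX = dW + dV$ with $dV_s = \frac{\gamma}{2(X_s+C)}\,ds$ yields $(H\cdot X)_t = (H\cdot W)_t + \int_0^t H_s \frac{\gamma}{2(X_s+C)}\,ds$.

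For \eqref{CBessel1}, I would apply the second inequality in \eqref{eqPathDCadlag} at the bounded stopping time $\tau_n := \tau\wedge n \wedge \sigma_n$, where $\sigma_n := \inf\{t: X^*_t\geq n\}$. The key pointwise estimate is $H_s/(X_s+C) \leq 1_{\{X_s\geq 0\}}/\sqrt{s} \leq 1/\sqrt{s}$: on $\{X_s\geq 0\}$ combine $0\leq H_s \leq X_s/\sqrt{s}$ with $X_s/(X_s+C)\leq 1$, while on $\{X_s<0\}$ the quantity is non-positive since $H_s$ has the sign of $X_s$ and $X_s+C>0$. Integrating gives $\int_0^{\tau_n} H_s/(X_s+C)\,ds \leq 2\sqrt{\tau_n} = 2\sqrt{[X]_{\tau_n}}$. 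Since $|H|\leq 1$, $H\cdot W$ is an $L^2$-martingale on bounded intervals, so optional stopping at the bounded $\tau_n$ gives $\E[(H\cdot W)_{\tau_n}\mid \cF_0]=0$. Taking $\cF_0$-conditional expectations in the resulting pathwise bound (all terms being bounded by construction, hence integrable) yields $\E[X^*_{\tau_n}\mid \cF_0] \leq (6+2\gamma)\,\E[\sqrt{[X]_{\tau_n}}\mid \cF_0]$, and monotone convergence as $n\to\infty$ (so that $\tau_n \uparrow \tau$) delivers \eqref{CBessel1}.

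For \eqref{CBessel2} with $C=0$, the hypothesis $X_\cdot > 0$ almost everywhere in time forces $H\geq 0$, so the drift contribution $\int_0^{\tau_n} H_s \frac{\gamma}{2X_s}\,ds = \int_0^{\tau_n} \frac{\gamma/2}{\sqrt{s+(X^*_s)^2}}\,ds$ is non-negative. Consequently the first inequality in \eqref{eqPathDCadlag} simplifies to $\sqrt{[X]_{\tau_n}} \leq 3 X^*_{\tau_n} - (H\cdot W)_{\tau_n}$; conditioning on $\cF_0$, using $\E[(H\cdot W)_{\tau_n}\mid \cF_0]=0$, and sending $n\to\infty$ by monotone convergence yields \eqref{CBessel2}. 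The main obstacle throughout is the integrability needed to discard the martingale term---$X^*_\tau$ and $\sqrt{[X]_\tau}$ need not be integrable for the given $\tau$---but the joint localization $\tau_n := \tau\wedge n \wedge \sigma_n$ bounds every quantity and the monotone convergence step recovers the inequality at $\tau$.
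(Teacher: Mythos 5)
Your proof is correct and follows essentially the same route as the paper's: apply the pathwise Davis inequalities with the integrand $H_t=X_t/\sqrt{[X]_t+(X_t^*)^2}$, bound the drift contribution $\int_0^{\cdot} H_s\,\tfrac{\gamma}{2(X_s+C)}\,ds$ above by $\gamma\sqrt{[X]}$ (and note it is nonnegative when $C=0$), kill the martingale part $H\cdot W$ by optional sampling at bounded stopping times, and conclude by monotone convergence; the paper merely truncates at $\tau\wedge t$ instead of your $\tau\wedge n\wedge\sigma_n$ and carries out the drift estimate inside the conditional expectation rather than pathwise. The only quibble is that under the paper's conventions $[X]_t=X_0^2+t$ rather than $t$, so your step $2\sqrt{\tau_n}=2\sqrt{[X]_{\tau_n}}$ should read as the inequality $2\sqrt{\tau_n}\leq 2\sqrt{[X]_{\tau_n}}$, which is the direction you need anyway.
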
 

    \begin{proof}
 Taking $H_t:= X_t/ \sqrt{[X]_t + (X_t^*)^2}  $, since $|H|\leq 1$ we get that $\E \int_0^t H_s^2 ds \leq t<\infty$, so $H\cdot W$ is a martingale and  \eqref{besselX_0} gives 
 \begin{align}
\label{HdBesselC}
   \E((H\cdot X)_{\tau \wedge t}|\cF_0 )= \E\left(\int_0^{\tau \wedge t}  \frac{\gamma}{2 \sqrt{[X]_s + (X_s^*)^2} }  \frac{X_s}{X_s +C} ds   \Big|\cF_0 \right) .
\end{align} 
In particular, since $[X]_t=X_0^2+ t$  and $X_{\cdot}/(X_{\cdot}+C)\leq 1$ $\bP\otimes \mathcal{L}^1$ a.e., deleting the positive $X^*$ term from \eqref{HdBesselC} we
 can bound $ \E((H\cdot X)_{\tau \wedge t} |\cF_0 )  $ from above with
 \begin{align*}
  \E \left(\int_0^{\tau \wedge t}  \frac{\gamma}{2 \sqrt{X_0^2+s} } ds \Big|\cF_0  \right)=    \gamma\big(\E( \sqrt{X_0^2+\tau \wedge t} |\cF_0 ) -|X_0|\big) \leq 
   \gamma \E( \sqrt{[X]_{\tau \wedge t}} |\cF_0 )  \, .
\end{align*}  
Now evaluate the second inequality \eqref{eqPathDCadlag}  at time $\tau \wedge t$,
take $\E( \ldots | \cF_0)$, apply the bound we proved  for $ \E((H\cdot X)_{\tau \wedge t}|\cF_0 )  $  and  take limits\footnote{Use the monotone convergence theorem.} as $t\to \infty$ to  get \eqref{CBessel1}.   
If $C=0$ then $X_{\cdot}=X_{\cdot}-C>0$ $\bP\otimes \mathcal{L}^1$ a.e., so trivially from \eqref{HdBesselC} we get the bound $ \E((H\cdot X)_{\tau \wedge t}|\cF_0 )\geq 0$, so \eqref{CBessel2}  follows evaluating \eqref{eqPathDCadlag} at time $\tau \wedge t$ and taking  $\lim_{t\to \infty }\E( \ldots | \cF_0)$.
\end{proof}

\begin{proof}[Proof of Theorem \ref{Bessel}.]
The case  $\alpha\in \bN\setminus\{0\}$ follows from the analogous statement for Brownian motion (for which we refer to \cite[Page 37]{LenLepPra80} for general\footnote{The statement in the special case $\Phi(t)=t^p$ can be found in most books on stochastic calculus} $\Phi$ and dimension one; for higher dimension see our Section \ref{sec-HigherD}): let us show this in detail.
If $n\in \bN\setminus\{0\}$, $B$ is a $n$-dimensional Brownian motion started at $B_0$ and $X:=||B||_{\bR^n}$ then $[X]_t=||B_0||^2_{\bR^n}+ t$ and $ [B]_t= ||B_0||^2_{\bR^n}+nt$, so $[B]/n \leq [X] \leq [B]$. Thus, since $X^*_t=\sup_{s\leq t} ||B_s||_{\bR^n}$, the BDG inequalities applied to $B$ imply those for $X$.

From now on we can then assume $\alpha>1$. Since $X$ solves \eqref{bessel}, we can apply Lemma \ref{BesselEstHX} to $X$ with $C=0$ and $\gamma=\alpha -1$,  and taking expectations gives the thesis for $\Phi(t)=t$. So, let us consider the case of general $\Phi$ and $\alpha>1$.  If $\sigma \leq \hat{\sigma}$ are \emph{finite} stopping times then $\hat{W}_t:=W_{\sigma+t}- W_{\sigma}$ (resp.  $\hat{\sigma}-\sigma$) is a $ \hat{\cF}_t:=\cF_{\sigma+t}$ Brownian motion (resp. stopping time),  and 
$\hat{X}_t:=X_{\sigma + t} $ trivially satisfies $d\hat{X}=\frac{\alpha -1}{2\hat{X} } dt + d\hat{W} .$
 We can then apply \eqref{CBessel2} with $(X,W,\cF,\tau,C,\gamma):=(\hat{X},\hat{W}, \hat{\cF},\hat{\sigma}-\sigma,0,\alpha-1)$  and combine this with the bounds $\hat{X}_t^*\leq X^*_{\sigma +t}$ and 
\begin{align*}
 \sqrt{[X]_{\sigma+t}}-  \sqrt{[X]_{\sigma} 1_{\{\sigma>0 \}}}  \leq  \sqrt{[X]_{\sigma+t} -  [X]_{\sigma}1_{\{\sigma>0 \}} }
 =\sqrt{[\hat{X}]_t - X_{\sigma}^2 1_{\{\sigma>0 \}}} \leq  \sqrt{[\hat{X}]_t } \, 
\end{align*} 
to obtain
\begin{align}\label{condbesseldavis}
     \E ( \sqrt{[X]_{\hat{\sigma}}}-  \sqrt{[X]_{\sigma}} 1_{\{\sigma>0 \}} |\cF_{\sigma}  )  
             \leq   3 \E (\hat{X}^*_{\hat{\sigma} -\sigma} |\hat{\cF}_{0}  )   \, 
             \leq   3 \E (X^*_{\hat{\sigma}} |\cF_{\sigma} )   \, .
   \end{align}    
Define the localizing sequence $\sigma_n:=\inf\{t\geq 0: X^*_t + [X]_t\geq n\} \wedge n$   and, given  stopping times $\tau,\hat{\tau}, \theta$ with $\tau \leq \hat{\tau}$, define  $\hat{\sigma}:=\sigma_n  \wedge \theta \wedge  \hat{\tau}$, $\sigma:=\sigma_n    \wedge \theta \wedge \tau$. Since $\sigma \leq \hat{\sigma}<\infty$ and 
$\sqrt{[X^{\sigma_n  \wedge \theta}]}_{\tau}   1_{\{\sigma >0 \}} \leq \sqrt{[X^{\sigma_n  \wedge \theta}]}_{\tau}  1_{\{\tau >0 \}} $
we can apply \eqref{condbesseldavis} to get 
$$     \E ( \sqrt{[X^{\sigma_n  \wedge \theta}]}_{\hat{\tau}}-  \sqrt{[X^{\sigma_n  \wedge \theta}]}_{\tau} 1_{\{\tau>0 \}} |\cF_{\sigma}  )  \leq 
3 \E (X^*_{\hat{\sigma}} |\cF_{\sigma} )   \, .$$
Since $\sqrt{[X^{\sigma_n  \wedge \theta}]}_{\hat{\tau}}-  \sqrt{[X^{\sigma_n  \wedge \theta}]}_{\tau} 1_{\{\tau>0 \}}$ and $X^*_{\hat{\sigma}}$ are 
$\cF_{\sigma_n    \wedge \theta}$ measurable 
and $  \E (Y| \cF_{\sigma})=  \E( \E(Y| \cF_{\sigma_n    \wedge \theta}) | \cF_{\tau}) $
  for any positive r.v. $Y$, we obtain that 
$$\E ( \sqrt{[X^{\sigma_n  \wedge \theta}]}_{\hat{\tau}}-  \sqrt{[X^{\sigma_n  \wedge \theta}]}_{\tau}   1_{\{\tau>0 \}} |\cF_{\tau}  )               \leq   3 \E( X^*_{\hat{\sigma}} |\cF_{\tau} )   \, . $$ 
Integrating this over $\{\hat{\tau}> \tau \}\in \cF_{\tau}$ gives, since 
$\sqrt{[X^{\sigma_n  \wedge \theta}]}_{\hat{\tau}}   1_{\{\hat{\tau}>0 \}} \leq \sqrt{[X^{\sigma_n  \wedge \theta}]}_{\hat{\tau}}  $,
$$\E ( \sqrt{[X^{\sigma_n  \wedge \theta}]}_{\hat{\tau}}   1_{\{\hat{\tau}>0 \}} -  \sqrt{[X^{\sigma_n  \wedge \theta}]}_{\tau}   1_{\{\tau>0 \}} )               \leq   3 \E( (X^{\sigma_n  \wedge \theta})^*_{\hat{\tau}} 1_{\{\hat{\tau}> \tau \}})   \, . $$
It follows from \cite[Lemma 1.1]{LenLepPra80} that  for some constant $c$
$$     \E \Phi(\sqrt{[X]_{\sigma_n\wedge \theta}})= \E \Phi(\sqrt{[X^{\sigma_n  \wedge \theta}]}_{\infty}) \leq   c \E \Phi(X^{\sigma_n  \wedge \theta})^*_{\infty}  = c \E \Phi(X^*_{\sigma_n  \wedge \theta})    \, , $$
and so 
 taking $n\to \infty$ gives 
$     \E \Phi(\sqrt{[X]_{\theta}}) \leq  c \E \Phi(X^*_{ \theta})    \, . $

To prove the opposite inequality unfortunately we cannot work analogously with $\hat{X}$, because we would need to use that $  \sqrt{[\hat{X}]_t }   \leq \sqrt{[X]_{\sigma+ t} }  $, which is not true (indeed $[\hat{X}]_t=[X]_{\sigma+t} -  [X]_{\sigma} +X^2_{\sigma}$).
We consider instead\footnote{Analogously we cannot consider $Y$ instead of $\hat{X}$ in the previous proof, since the corresponding inequality in Lemma \ref{BesselEstHX}  only holds for $C=0$.}  $Y_{t}:=X_{\sigma +t }- X_{\sigma} 1_{\{\sigma>0 \}}$, so that 
\begin{align}
\label{dY}
dY=\frac{\alpha -1}{2(Y+X_{\sigma} 1_{\{\sigma>0\}})} dt + d\hat{W} .
\end{align} 
 Applying  \eqref{CBessel1} with $(X,W,\cF,\tau,C,\gamma):=(Y,\hat{W}, \hat{\cF},\hat{\sigma}-\sigma, X_{\sigma} 1_{\{\sigma>0\}},\alpha -1)$, and combining this with the bounds $  \sqrt{[Y]_t }   \leq \sqrt{[X]_{\sigma+ t} }  $ and 
$$ X^*_{\sigma+t} - X^*_{\sigma} 1_{\{\sigma>0\}} \leq Y^*_{t}  $$
gives, for  $m:=(6  + 2(\alpha-1)) $,
\begin{align}\label{condbesseldavis2}
     \E ( X^*_{\hat{\sigma}} - X^*_{\sigma} 1_{\{\sigma>0\}} |\cF_{\sigma}  )  \leq  
      m \E (\sqrt{[Y]_{\hat{\sigma}-\sigma}} |\hat{\cF}_{0} )  \leq
             m \E ( \sqrt{[X]_{\hat{\sigma}}} |\cF_{\sigma} ) \, ,
   \end{align}  
which is the equivalent of \eqref{condbesseldavis} (with $X^*$ and $\sqrt{[X]}$ reversed).
The proof now continues exactly as above.
\end{proof}

\section{Random times}
\label{sec-RandomTime}
From Theorem \ref{PathBDGCadlag} and Corollary \ref{HXmart} (resp. Theorem \ref{PathDCadlag} and Corollary  \ref{TradDavisIneq}) it follows   that the BDG inequalities \eqref{BDGtau}
hold for any pseudo stopping time $\tau$ and local martingale $X$ if $\Phi$ is as in Theorem \ref{PathBDGCadlag} (resp. if $\Phi(t)=t$); to see this, one first has to localize $X$ so as to make $H\cdot X$ and $G\cdot X$ in $H^1$, then take expectations and then limits (using the monotone convergence theorem).

Although the above extension to pseudo stopping times had already been proved in \cite[Proposition\footnote{This has a typo: $p$ should be $\geq 1$. Only if $M$ is continuous one can take any $p>0$.} 2]{Nik08} with change of filtration techniques, it is convenient that it follows automatically from our approach; moreover, as mentioned in the introduction, we are able to obtain yet another setting in which \eqref{BDGtau} holds, and this seems to be new.
Indeed, one can go the other way around and, given an arbitrary random time $\tau$, study the subspace $\cS_1(\tau)$ of  $M\in H^1$ for which $\E M_{\tau}=0$. The above discussion shows that if\footnote{Because of Corollary \ref{TradDavisIneq} it is clear that $H \cdot X$ is in $H^1$ if one (and thus both) of the quantities in \eqref{DCadlag} are finite; what it not clear is whether $\E (H \cdot X)_{\tau}=0$.} $H \cdot X\in \cS_1(\tau)$, where $X$ is a local martingale and $H$ is as in Theorem \ref{PathDCadlag}, then \eqref{BDGtau} hold for $\Phi(t)=t$; analogously for $H,G$ from Theorem \ref{PathBDGCadlag} and a correspondingly general $\Phi$. As we already said, this can be useful since $\cS_1(\tau)$ is `large' and can be quite explicitly characterized: see \cite[Section 3]{Nik07}. This works out particularly well when $\tau$ is an honest time; for example one can show that 
if   $(\cF_t)_t$ is the filtration generated by a one dimensional Brownian motion $B$ and  $\tau:=	\sup \{ t<\sigma: B_t=0\}$, where $\sigma$ is the first time $B$ hits $1$, 
then $\tau$ is an honest time and if $(L_t)_t$ denotes the local time at zero of $B$ and 
$\cL^n(x)$ is the Laguerre polynomial $\frac{e^x}{n!}\frac{d^n}{dx^n} (x^n e^{-x})$ then $M_t:=\E[\cL_n(L_{\sigma})|\cF_t]$ is in $\cS_1(\tau)$ whenever $n\neq 1$: see \cite[Example 3.7]{Nik07}.

\section{Randomized stopping times}
\label{sec-RandomizedStTime}
In this section we prove that for $\Phi$ as in\footnote{If $X$ is continuous then one can even take $\Phi$ as in Theorem \ref{Bessel}} Theorem \ref{PathBDGCadlag} (and also for $\Phi(t)=t$) we have
\eqref{BDGdA} for any local martingale $X$ and randomized stopping time $A$; but first, we need some more definitions. 
We say that $A$ is a randomized stopping time if it is a cadlag increasing adapted process with $A_0=0$ and $\lim_{t\to \infty} A_t\leq 1$. Breaking from our conventions, in this section we allow $A$ to have a jump at infinity: we will write $A_{\infty-}$ for $\lim_{t\to \infty} A_t$, and we define $A_{\infty}:=1$ (however $X^*_{\infty}$ is defined as usual as $\lim_{t\to \infty} X^*_t$, and analogously for $[X]_{\infty}$) and if $f:[0,\infty]\to [0,\infty] $ is Borel then  $\int_0^{\infty} f(s) d A_s :=\int_{(0,\infty]} f(s) d A_s$.
If $\tau$ is a stopping time (not necessarily finite) $A_t:=1_{[\tau,\infty)}(t)$ is a randomized stopping time, and $f(\tau)=\int_0^{\infty} f(s) d A_s $.

Let us now prove \eqref{BDGdA}; as we just said, the integrals are over $(0,\infty]$.
If the underlying space is $(\Omega, (\cF_t)_t, \bP)$, consider the enlargement $\bar{\Omega}:=\Omega \otimes [0,1]$ endowed with the product probability $ \bar{\bP}:=\bP \otimes \mathcal{L}^1$, and the usual augmentation  $(\bar{\cF}_t)_t$ of the filtration $(\cF_t \otimes \cB)_t$ (where $ \mathcal{L}^1$ denotes the Lebesgue measure and $\cB$ the Borel subsets of $[0,1]$). 
If $\tau_n$ is a  localizing sequence for $X$ and $Y$ a process on $\Omega$, we extend them to $\bar{\Omega}$ by setting for all $t$
 $$\bar{\tau}_n(\omega,s):=\tau_n(\omega) \, , \quad \bar{Y}_t(\omega,s):=Y_t(\omega) \quad \text{ for all } \, \omega\in \Omega, s\in [0,1] .$$ Let  $C(\omega,s):=\inf \{ t: A_t(\omega) > s\}  $ be the cad inverse of $A$; then trivially $\bar{\tau}_n$ are $(\bar{\cF}_t)_t$  stopping times localizing the $(\bar{\cF}_t)_t$  local martingale $\bar{X}$,  and   $C$ is a   $(\bar{\cF}_t)_t$  stopping time  since $\{C< t\}=\{(\omega,s):s < A_{t-}(\omega)\}$ is in  $\cF_t \otimes \cB$.
 In particular we can apply the BDG inequalities \eqref{DCadlag} and \eqref{BGCadlag} to the local martingale $\bar{X}$ stopped at $C$, and then use the  change of time  formula
 $$ \textstyle 
\bar{\E} \bar{Y}_{C} := \int \bar{Y}_{C} d\bar{\bP}=\E \int_0^1 Y_{C_s} ds  =\E \int_0^{\infty} Y_s dA_s $$ 
with $Y=\Phi(\sqrt{[\bar{X}]})$ and $Y=\Phi(\bar{X}^*)$ to obtain \eqref{BDGdA}.

\section{Change of Measure}
\label{sec-Change of Measure}
As we mentioned, the proof of the equivalence $M\in BMO(\P)\Leftrightarrow $ \eqref{BDGP} is not simple; in this section we show how to easily get the following weaker\footnote{In this statement the role of $\bP$ and $\hat{\bP}$ is reversed, and  $\hat{\bP}$ is chosen so that $d\hat{W}:=dW + \mu dt$ is a $\hat{\bP}$-Brownian motion. Given $\hat{M}:=\mu \cdot \hat{W}$, the local $\hat{\bP}$-martingale $exp(\hat{M}-[\hat{M}]/2)$ is in $BMO(\hat{\bP})$ since it has bounded quadratic variation, so this theorem is a special case of the result above.} statement.
\begin{theorem}
\label{ }
For every  $s,T\geq 0$ there exist $c,C$ such that 
\begin{align}
\label{BDGX}
   	  c \E     \sqrt{[X]_{\tau}}\leq \E X_{\tau}^* \leq C  \E    \sqrt{[X]_{\tau}} 
\end{align} 
holds for all  stopping times  $\tau$ and $X$ satisfying\footnote{We are not assuming that this SDE has solution with explosion time strictly bigger than $\tau$: this will depend on $\sigma$. We are saying that, \emph{if} $(X_t)_{t\in [0,\infty)}$ is a such a solution, then \eqref{BDGX} holds.} $dX= \sigma X (dW + \mu dt)$ on $[0,\tau]$ for some  predictable  $\sigma,\mu$ such that $|\mu|$ and $|\sigma \mu|$ are bounded by $s$ and  $\mu=0$ on $(T,\infty)$.
\end{theorem}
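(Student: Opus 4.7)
The plan is to reduce \eqref{BDGX} for the semimartingale $X$ to the classical Davis inequality applied to its local-martingale part, controlling the drift with a pathwise Gronwall argument; the joint hypothesis $|\sigma\mu|\leq s$ (rather than separate bounds on $|\sigma|$ and $|\mu|$) is precisely what makes this work.

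Since $dX=\sigma X(dW+\mu\,dt)$ on $[0,\tau]$, decompose the stopped process as $X^\tau=X_0+M+V$, where $M_t:=\int_0^{t\wedge\tau}\sigma_u X_u\,dW_u$ is a local martingale and $V_t:=\int_0^{t\wedge\tau}\sigma_u X_u\mu_u\,du$ is absolutely continuous of finite variation; since $V$ is continuous of finite variation, $[M,V]=0=[V]$ and so $[X]_\tau=[M]_\tau$. Using $|\sigma\mu|\leq s$ together with $\mu\equiv 0$ on $(T,\infty)$ gives at once $|V_t|\leq s\int_0^{t\wedge T}|X_u|\,du\leq sT\,X_{t\wedge\tau}^*$, so that $V_t^*\leq sT\,X_{t\wedge\tau}^*$.

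From $|X_u|\leq|X_0|+M_u^*+s\int_0^{u\wedge T}X_v^*\,dv$ and taking supremum in $u\leq t\leq T$, one obtains $X_t^*\leq|X_0|+M_t^*+s\int_0^t X_v^*\,dv$; pathwise Gronwall then gives $X_t^*\leq e^{sT}(|X_0|+M_t^*)$ for $t\leq T$, and for $t>T$ the constancy of $V$ after $T$ combined with this $t=T$ bound yields
\begin{equation*}
  X_t^*\leq K\bigl(|X_0|+M_t^*\bigr)\qquad\text{for all }t\geq 0,\qquad K:=1+sT\,e^{sT}.
\end{equation*}

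Finally, Corollary~\ref{TradDavisIneq} applied to the local martingale $M$ gives $\E M_\tau^*\leq 6\,\E\sqrt{[M]_\tau}=6\,\E\sqrt{[X]_\tau}$ and $\E\sqrt{[X]_\tau}\leq 3\,\E M_\tau^*$. Combining with $|X_0|\leq X_\tau^*$ and $|X_0|\leq\sqrt{[X]_\tau}$ (the latter by the paper's convention $\sqrt{[X]_0}=|X_0|$ and monotonicity of $\sqrt{[X]}$), and with $M_\tau^*\leq X_\tau^*+|X_0|+V_\tau^*\leq (2+sT)X_\tau^*$,
\begin{align*}
  \E X_\tau^*&\leq K\bigl(\E|X_0|+\E M_\tau^*\bigr)\leq 7K\,\E\sqrt{[X]_\tau},\\
  \E\sqrt{[X]_\tau}&\leq 3\,\E M_\tau^*\leq 3(2+sT)\,\E X_\tau^*,
\end{align*}
which is \eqref{BDGX} with $c:=1/(3(2+sT))$ and $C:=7K$. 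The argument is essentially soft; the only point requiring care is the Gronwall step, which closes because the assumption on $|\sigma\mu|$ controls the drift $dV$ \emph{linearly} by $X$ itself, and the cut-off $\mu\equiv 0$ on $(T,\infty)$ keeps the Gronwall exponent finite.
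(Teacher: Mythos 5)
Your proof is correct, but it follows a genuinely different route from the paper's. The paper applies the \emph{pathwise} Davis inequalities \eqref{eqPathDCadlag} directly to the semimartingale $X$ with the integrand $H_t=X_t/\sqrt{[X]_t+(X_t^*)^2}$, and then controls $\E(H\cdot X)_{\tau_n\wedge\tau}$ by splitting $dX=\sigma X\,dW+\sigma X\mu\,dt$: the Brownian part has zero expectation after localization, while the drift part is bounded by $sT\,\E X^*_\tau$ (via $|\sigma\mu|\leq s$) for one direction and by $s\sqrt{T}\,\E\sqrt{[X]_\tau}$ (via $|\mu|\leq s$, Cauchy--Schwarz and $\int_0^\tau\sigma^2X^2\,dt\leq[X]_\tau$) for the other, yielding the constants $3+sT$ and $6+2s\sqrt{T}$. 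You instead decompose $X^\tau=X_0+M+V$, run a pathwise Gronwall estimate to get $X^*\leq K(|X_0|+M^*)$ with $K=1+sTe^{sT}$, and then invoke only the \emph{classical} Davis inequality of Corollary \ref{TradDavisIneq} for the local martingale $M$. Each approach buys something: yours never uses the separate bound $|\mu|\leq s$ (only $|\sigma\mu|\leq s$ and the cut-off of $\mu$ after $T$) and needs no pathwise machinery beyond the standard Davis inequality; the paper's gives much better constants (polynomial rather than exponential in $sT$) and illustrates the intended use of the pathwise inequality, namely estimating the correction term $(H\cdot X)$ under the given measure without any change of measure or Gronwall loop. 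One small repair: under the paper's convention $\sqrt{[X]_0}=|X_0|$ you have $[X]_\tau=X_0^2+[M]_\tau$, not $[X]_\tau=[M]_\tau$ (which clashes with your later use of $|X_0|\leq\sqrt{[X]_\tau}$); this is harmless, since $\sqrt{[M]_\tau}\leq\sqrt{[X]_\tau}$ covers the upper bound, while for the lower bound $\sqrt{[X]_\tau}\leq|X_0|+\sqrt{[M]_\tau}$ and $|X_0|\leq X^*_\tau$ merely change your constant $c$ from $1/(3(2+sT))$ to $1/(7+3sT)$.
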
 

\begin{proof}
Since $X$ satisfies $dX= \sigma X (dW + \mu dt)$, $X$ is continuous and $\int_0^{t\wedge \tau} \sigma^2 X^2  ds<\infty$ a.s. for all $t<\infty$ (otherwise $\int_0^{t} \sigma X dW $ is not defined on $[0,\tau]$).
 Thus, if $H_t:= X_t/ \sqrt{[X]_t + (X_t^*)^2}  \in [-1,1]$, $M_t:=\int_0^{t\wedge \tau} H\sigma X dW$ is well defined and a local martingale, and so there exist a localizing sequence  $(\tau_n)_n$ for it and  we get 
$$ \hspace{-0.5cm} | \E(H\cdot X)_{\tau_n \wedge \tau } | = | \E \int_0^{\tau_n \wedge \tau } H  \sigma X \mu\,  dt| \leq  s \E \int_0^{T\wedge \tau} |  X | \,  dt \leq    s T \E  X_\tau^* , $$
and thus applying \eqref{eqPathDCadlag}, localizing, taking expectations and passing to the limit we conclude $\E \sqrt{[X]}_\tau   \leq (3+sT) \E X^*_\tau $.
Analogously since by Holder inequality  
$$ | \E \int_0^{\tau_n \wedge \tau} H  \sigma X \mu\,  dt| \leq   s \E \int_0^{\infty} 1_{[0,T]} (1_{[0,\tau]} | \sigma X |) \,  dt \leq
 s\E \sqrt{T}  \sqrt{  \int_0^{ \tau}  \sigma^2 X^2 \,  dt } $$
 and $ \int_0^{ \tau}  \sigma^2 X^2 \,  dt = [X]_\tau$, we can conclude that $     \E    X^*_\tau  \leq (6  + 2s\sqrt{T}) \, \E\sqrt{[X]}_\tau \, $.
\end{proof} 

The reader may be interested in knowing that there are several   conditions equivalent to  \eqref{BDGP}  which one can impose on $Z$: the reverse Holder inequality, the Muckenhoupt $(A_p)$ condition and yet another unnamed $(B_p)$ condition (see  Theorem 3.4,  Corollary 3.4 and  Theorem\footnote{This Theorem has a typo, it should be $p>1$ not $p\geq 1$.} 2.4 in \cite{kaz94}).
Moreover, in \cite{BoLe79} one can find (under additional conditions) an extension of the implication  $M\in BMO(\P) \Rightarrow$ \eqref{BDGP} in the case where not every martingale is continuous.

\section{Higher Dimension}
\label{sec-HigherD}
The pathwise BDG inequalities \eqref{eqPathDCadlag} and  \eqref{PathBGCadlag}, so far stated and proved in dimension $1$, automatically hold in any finite dimension (with worse constants).
Indeed, since on $\bR^n$ all norms are equivalent,  if $|| \cdot ||_{\bR^n}$ denotes the Euclidian norm there exist  $0<\alpha_n< \beta_n$ such that   for every $n$-dimensional semimartingale $X=(X^i)_{i=1,\ldots, n}$
$$\alpha_n  \sum_{i=1}^n  (X^i)_t^* = 
\alpha_n  \sum_{i=1}^n  \sup_{s\leq t} |X^i_s| 
\leq X^*_t:=\sup_{s\leq t} ||X||_{\bR^n} \leq 
\beta_n  \sum_{i=1}^n  (X^i)_t^*  . $$
Thus the fact that Davis inequalities \eqref{eqPathDCadlag} hold for $X$ 
follows summing over $i$ the corresponding inequalities for $X^i$
(since $[X]=\sum_i [X^i]$ and $(H \cdot X):=\sum_i (H^i \cdot X^i)$).
Similarly one obtains \eqref{PathBGCadlag}, using also the fact that for all $t_i\geq 0$
$$ \frac{1}{n}\sum_{i=1}^n \Phi(t_i) \leq  \Phi(\sum_{i=1}^n t_i)
\leq n^{p-1} \sum_{i=1}^n \Phi(t_i) \, ;$$
these inequalities hold since $\Phi$ is increasing, convex and satisfies  $ \Phi(nt)\leq n^p \Phi(t)$.

Unfortunately, all this falls short of what one can do with the classic BDG inequalities, for which one can not only 
apply the above reasoning, but also prove that automatically they hold for every martingale $M$ with values in a Hilbert space $H$, and with the same constant as for $\bR^2$. In fact, one can easily construct (possibly on an enlarged probability space) a $\bR^2$-valued  martingale $N$ such that 
 $|| M_t ||_{H} =|| N_t ||_{\bR^2}$ and $ [M]_{t} = [N]_{t} $ for all $t\geq 0$. For the simple proof of this nice yet not so well-known result of \cite{KaSz91} in the discrete time case  see \cite[proposition 5.8.3]{KwaWoy92}. For the general (much harder) cadlag case one can consult  \cite{KaSz91};  notice however that one does \emph{not} need this to prove the BDG inequalities for cadlag martingales, as these follow from their discrete time version!


\begin{thebibliography}{10}

\bibitem{BeNu14}
M.~Beiglb{\"o}ck and M.~Nutz.
\newblock Martingale inequalities and deterministic counterparts.
\newblock {\em arXiv preprint arXiv:1401.4698}, 2014.

\bibitem{Bi81}
K.~Bichteler.
\newblock Stochastic integration and {$L^{p}$}-theory of semimartingales.
\newblock {\em Ann. Probab.}, 9(1):49--89, 1981.

\bibitem{BoLe79}
A.~Bonami and D.~L{\'e}pingle.
\newblock Fonction maximale et variation quadratique des martingales en
  pr{\'e}sence d'un poids.
\newblock In {\em S{\'e}minaire de Probabilit{\'e}s XIII}, pages 294--306.
  Springer, 1979.

\bibitem{BoNu13}
B.~Bouchard and M.~Nutz.
\newblock Arbitrage and duality in nondominated discrete-time models.
\newblock {\em arXiv preprint arXiv:1305.6008}, 2013.

\bibitem{Ch75}
C.~Ching-Sung.
\newblock Les methodes d'{A}. {G}arsia en theorie des martingales. {E}xtensions
  au cas continu.
\newblock {\em S{\'e}minaire de Probabilit{\'e}s IX Universit{\'e} de
  Strasbourg}, pages 213--225, 1975.

\bibitem{DelbSch:94}
F.~Delbaen and W.~Schachermayer.
\newblock A general version of the fundamental theorem of asset pricing.
\newblock {\em Math. Ann.}, 300(3):463--520, 1994.

\bibitem{DeMeB}
C.~Dellacherie and P.-A. Meyer.
\newblock {\em Probabilities and potential. {B}}, volume~72 of {\em
  North-Holland Mathematics Studies}.
\newblock North-Holland Publishing Co., Amsterdam, 1982.
\newblock Theory of martingales, Translated from the French by J. P. Wilson.

\bibitem{GrPe98Be}
S.~Graversen and G.~Pe{\v{s}}kir.
\newblock Maximal inequalities for {B}essel processes.
\newblock {\em Journal of Inequalities and Applications}, 1998(2):621735, 1998.

\bibitem{JacodShir:02}
J.~Jacod and A.~N. Shiryaev.
\newblock {\em Limit theorems for stochastic processes}, volume 288 of {\em
  Grundlehren der Mathematischen Wissenschaften [Fundamental Principles of
  Mathematical Sciences]}.
\newblock Springer-Verlag, Berlin, second edition, 2003.

\bibitem{KaSz91}
O.~Kallenberg and R.~Sztencel.
\newblock Some dimension-free features of vector-valued martingales.
\newblock {\em Probability Theory and Related Fields}, 88(2):215--247, 1991.

\bibitem{Ka95}
R.~Karandikar.
\newblock On pathwise stochastic integration.
\newblock {\em Stochastic Process. Appl.}, 57(1):11--18, 1995.

\bibitem{kaz94}
N.~Kazamaki.
\newblock Continuous exponential martingales and {BMO}.
\newblock {\em Lecture notes in mathematics}, vol. 1579, 1994.

\bibitem{KraRut58}
M.~Krasnoselskii and Y.~B. Rutickii.
\newblock Convex functions and {O}rlicz spaces. {N}oordhoff, {G}roningen
  (1961).
\newblock {\em MR}, 126722(23):A4016, 1958.

\bibitem{KwaWoy92}
S.~Kwapien and W.~A. Woyczynski.
\newblock {\em Random series and stochastic integrals: single and multiple}.
\newblock Boston, 1992.

\bibitem{LenLepPra80}
E.~Lenglart, D.~L{\'e}pingle, and M.~Pratelli.
\newblock Pr{\'e}sentation unifi{\'e}e de certaines in{\'e}galit{\'e}s de la
  th{\'e}orie des martingales.
\newblock In {\em S{\'e}minaire de Probabilit{\'e}s XIV 1978/79}, pages 26--48.
  Springer.

\bibitem{Sio13BDG}
P.~S. Mathias~Beiglb\"{o}ck.
\newblock Pathwise versions of the {B}urkholder--{D}avis--{G}undy inequality.
\newblock {\em Bernoulli}, 21(1):360--373, 2015.

\bibitem{Me76}
P.-A. Meyer.
\newblock Un cours sur les int{\'e}grales stochastiques (expos{\'e}s 1 {\`a}
  6).
\newblock {\em S{\'e}minaire de Probabilit{\'e}s}, 10:245--400, 1976.

\bibitem{Nik07}
A.~Nikeghbali.
\newblock Non-stopping times and stopping theorems.
\newblock {\em Stochastic Processes and their Applications}, 117(4):457--475,
  2007.

\bibitem{Nik08}
A.~Nikeghbali.
\newblock How badly are the {B}urkholder--{D}avis--{G}undy inequalities
  affected by arbitrary random times?
\newblock {\em Statistics \& Probability Letters}, 78(6):766--770, 2008.

\bibitem{NikYor05}
A.~Nikeghbali and M.~Yor.
\newblock A definition and some characteristic properties of pseudo-stopping
  times.
\newblock {\em Annals of Probability}, pages 1804--1824, 2005.

\bibitem{Os10}
A.~Osekowski.
\newblock Sharp maximal inequalities for the martingale square bracket.
\newblock {\em Stochastics: An International Journal of Probability and
  Stochastics Processes}, 82(06):589--605, 2010.

\bibitem{Pro04}
P.~E. Protter.
\newblock {\em Stochastic integration and differential equations}, volume~21 of
  {\em Applications of Mathematics (New York)}.
\newblock Springer-Verlag, Berlin, second edition, 2004.
\newblock Stochastic Modelling and Applied Probability.

\bibitem{ReYo99}
D.~Revuz and M.~Yor.
\newblock {\em Continuous martingales and {B}rownian motion}, volume 293 of
  {\em Grundlehren der Mathematischen Wissenschaften [Fundamental Principles of
  Mathematical Sciences]}.
\newblock Springer-Verlag, Berlin, third edition, 1999.

\bibitem{Sek79}
T.~Sekiguchi.
\newblock {BMO}-martingales and inequalities.
\newblock {\em Tohoku Mathematical Journal, Second Series}, 31(3):355--358,
  1979.

\bibitem{Sek80}
T.~Sekiguchi.
\newblock Weighted norm inequalities on the martingale theory.
\newblock {\em Mathematics reports}, 3:37--100, 1980.

\end{thebibliography}
\end{document}